\documentclass[11pt,a4]{amsart}
 \topmargin=0truecm \oddsidemargin=0truecm \evensidemargin=0truecm
\textheight=22cm
\textwidth=16cm

\pagestyle{plain}

\usepackage{amsfonts,amsmath,amssymb,bbm}
\usepackage{verbatim}

\makeatletter
\renewcommand{\section}{\@startsection{section}{1}{0pt}{20pt}{6pt}{\large\bfseries}}
\makeatother

\usepackage{enumerate}
\usepackage{pdfsync}

\numberwithin{equation}{section}

\theoremstyle{plain}
  \newtheorem{theorem}{Theorem}[section]
  \newtheorem{proposition}[theorem]{Proposition}
  \newtheorem{lemma}[theorem]{Lemma}
   \newtheorem{corollary}[theorem]{Corollary}
   
\theoremstyle{remark}

  \newtheorem{remark}[theorem]{Remark}

\def \\ { \cr }
\def\R{\mathbb{R}}
\def \1{1 \mkern -6mu 1}
\def\N{\mathbb{N}}
\def\E{\mathbb{E}}
\def\P{\mathbb{P}}

\def \e{{\bf e}}

\newcommand{\LL}{L\'{e}vy }
\newcommand{\LLP}{L\'{e}vy process }
\newcommand{\LLPs}{L\'{e}vy processes }

\newcommand{\Id}[1]{\mathbb{I}_{\{#1\}}}
\newcommand{\Tb}{\mathcal{T}_{\beta}}
\newcommand{\Ph}{\mathcal{P}}
\newcommand{\rb}{\right)}
\newcommand{\lb}{\left(}
\newcommand{\Ec}[1]{\mathbb{E}\left[#1\right]}

\begin{document}
\title[Exponential Functional of L\'evy processes]{Extended factorizations of exponential functionals of L\'evy processes}

\author{P. Patie}

\address{Department of Mathematics, Universit\'e Libre de Bruxelles, B-1050 Bruxelles, Belgium.}
\email{ppatie@ulb.ac.be}

\author{M. Savov}
\address{New College, University of Oxford, Oxford, Holywell street OX1 3BN, UK.}
 \email{savov@stats.ox.ac.uk, mladensavov@hotmail.com}


\begin{abstract}
In \cite{Pardo-Patie-Savov}, under mild conditions, a Wiener-Hopf type factorization is derived for the exponential functional of  proper L\'evy processes. In this paper, we extend this  factorization by relaxing a finite moment assumption as well as by considering the exponential functional  for killed L\'evy processes. As a by-product, we derive some interesting fine distributional properties enjoyed by a large class of this random variable, such as the absolute continuity of its distribution and the smoothness, boundedness or complete monotonicity of its density. This type of results is then used to derive similar properties for the law of maxima and first passage time of some stable L\'evy processes. Thus, for example, we show that for any stable process with $\rho\in(0,\frac{1}{\alpha}-1]$, where $\rho\in[0,1]$ is the positivity parameter and $\alpha$ is the stable index, then the first passage time has a bounded and non-increasing density on $\R_{+}$. We also generate many instances of integral or power series representations for the law of the exponential functional of L\'evy processes with one or two-sided jumps.  The proof of our main results requires different devices from the one developed in \cite{Pardo-Patie-Savov}. It relies in particular on a generalization  of a transform recently introduced in \cite{Chazal-al-10} together with some extensions to killed L\'evy process of  Wiener-Hopf techniques.
\end{abstract}

\keywords{Exponential functional, L\'evy processes, Wiener-Hopf factorizations, Infinite divisibility, Complete monotonicity,  Stable L\'evy processes, Special functions}

\subjclass[2000]{60G51, 60J55, 60E07, 44A60}

\maketitle

\section{Introduction and main results}\label{Sec1}
Let $\xi=(\xi_t)_{t\geq0}$ be a possibly killed L\'evy process starting from $0$. We denote by $\Psi_q$ its L\'evy-Khintchine exponent  which takes the form, for any $z\in i\R$,
\begin{equation}\label{Levy-K}
\Psi_q(z)=bz +\frac{\sigma^{2}}{2}z^{2}+\int_{-\infty}^{\infty}\left(e^{zy} - 1 - zy \mathbb{I}_{\{|y|<1\}}\right)\Pi(dy)-q,
\end{equation}
where $q\geq 0$ is the killing rate, $\sigma\geq0, b \in \R$ and $\Pi$ is a sigma-finite positive measure satisfying the condition
$\int_{\mathbb{R}}(y^{2}\wedge 1)\Pi(dy)<\infty$.  In this paper, we are interested in both characterizing the distribution and deriving some fine distributional properties of the so-called  exponential functional of $\xi$, which is defined by
\begin{equation*}
{\rm{I}}_{\Psi_q}=\int_0^{\infty}e^{\xi_t}\mathbb{I}_{\{t<\e_q\}}dt,
\end{equation*}
where  $\e_q$ is the lifetime of $\xi$, i.e.~it is an exponential random variable of parameter $q$ (with the convention that $\e_0=\infty$) independent of $\xi$.  When $q=0$ we simply write $\Psi=\Psi_0$ and we assume that $\xi$ drifts to $-\infty$.  
The motivation for studying this positive random variable finds its roots in probability theory but has some strong connections with
issues coming from other fields of mathematics such as functional and complex analysis. Besides their inherent interest,
problems of this type have also ties with other areas of sciences, e.g. astrophysics, biology, insurance and
mathematical finance. It is also worth mentioning that there exists a close connection between the law of the exponential functional of some specific L\'evy processes and the one of the maxima of stable processes offering a way to study the fluctuation of these processes from a perspective different from the classical Wiener-Hopf techniques. 
We refer to \cite{Pardo-Patie-Savov} for a thorough description of the recent methodologies which have been developed to investigate the distribution of ${\rm{I}}_{\Psi_q}$. In particular, we mention that, in that  paper, it is shown under a mild assumption that,  when $q=0$ and $-\infty<\E\left[\xi_{1}\right]<0$,  the variable ${\rm{I}}_{\Psi}$ factorizes into the product of two independent exponential functionals of L\'evy processes defined in terms of the ladder height processes of $\xi$. The purpose of this paper is to extend this Wiener-Hopf type factorization by first relaxing   the finite moment condition on the underlying L\'evy processes and then by deriving similar factorization identities for the exponential functional of killed L\'evy processes. We emphasize that the approach carried out in \cite{Pardo-Patie-Savov} can not be used to deal with this generalization. Indeed, therein, the main identity is obtained by means of the functional equation \eqref{eq:rec_sub1}, satisfied by the Mellin transform of ${\rm{I}}_{\Psi}$ combined with the characterization of its distribution as the stationary measure of some generalized Ornstein-Uhlenbeck processes. Indeed the law of ${\rm{I}}_{\Psi_q}$, for any $q>0$, cannot be identified as a stationary measure of some Markov process anymore whereas  when $q=0$ and the first moment of $\xi_1$ is not finite, the functional equation \eqref{eq:rec_sub1} does not hold even on the imaginary line and therefore we cannot directly guess the existence of a probabilistic factorization. In order to circumvent these difficulties our strategy relies on a transformation between Laplace exponents of L\'evy processes which allows to establish a connection between the study of the exponential functional for unkilled and killed L\'evy processes. This will be achieved by generalizing to our context a mapping recently introduced by Chazal et al.~\cite{Chazal-al-10} and by providing some interesting  results concerning the Wiener-Hopf factorization of killed L\'evy processes. We also indicate that our extended factorizations of exponential functionals allow  us to identify some fine distributional properties enjoyed by a large class of these random variables, such as the smoothness of their distribution, the monotonicity,  complete monotonicity of their density, etc. We will be using these type of results to provide some new distributional properties enjoyed by the density of first passage times for some stable L\'evy processes.\newline 

\noindent In order to state  our main result we introduce some notation. First, we recall that the reflected processes
 $\left(\sup_{0\leq s\leq t}\xi_s-\xi_t\right)_{t\geq 0}$ and $\left(\xi_t-\inf_{0\leq s\leq t}\xi_s\right)_{t\geq 0}$ are Feller processes in $[0,\infty)$ which possess local times $(L^{\pm}_t)_{t\geq0}$ at level $0$, see \cite[Chapter IV]{Bertoin-96}. The ascending and descending ladder times are defined as the right-continuous inverse of $L^{\pm}$, viz. $(L^{\pm}_t)^{-1}=\inf\{s> 0;\:  L^{\pm}_s>t\}$
 and the ladder height processes
 $H^+$ and $H^-$ by
 $$H^+_t=\xi_{(L^{+}_t)^{-1}}=\sup_{0\leq s\leq (L^{+}_t)^{-1}}\xi_s\,, \qquad \hbox{ whenever }(L^{+}_t)^{-1}<\infty\,,$$
  $$H^-_t=\xi_{(L^{-}_t)^{-1}}=\inf_{0\leq s\leq (L^{-}_t)^{-1}}\xi_s\,, \qquad \hbox{ whenever } (L^{-}_t)^{-1}<\infty\,.$$
Here, we use the convention that $\inf\varnothing =\infty$ and $H^{\pm}_t=\infty$, when $L^{\pm}_{\infty}\leq t$. 
From \cite[p. 27]{Doney}, we have for $q\geq 0$, $s\geq 0$,
 \begin{equation}\label{BivLadder}
\log{ \E\left[e^{-q (L^{+}_1)^{-1}-s H^{+}_1}\right]} = -\Phi_+(q,s)=-k_{+}-\eta_{+}q-\delta_{+}s-\int_{0}^{\infty}\int_{0}^{\infty}\Big(1-e^{-(q y_{1}+s y_{2})}\Big)\mu_{+}(dy_{1},dy_{2}),
 \end{equation}
 where $\eta_{+}$ (resp.~$\delta_{+}$) is the drift of the subordinator $(L^{+})^{-1}$ (resp.~$H^+$) and $\mu_{+}(dy_{1},dy_{2})$ is the \LL measure of the bivariate subordinator $\lb (L^{+})^{-1},H^{+}\rb $. Similarly, for $q\geq 0$, $s\geq 0$,
 \begin{equation}\label{BivLadder1}
\log  \E\left[e^{-q (L_1^{-})^{-1}+s H^{-}_1}\right]=-\Phi_-(q,s)=-\eta_{-}q-\delta_{-}s-\int_{0}^{\infty}\int_{0}^{\infty}\Big(1-e^{-(q y_{1}+s y_{2})}\Big)\mu_{-}(dy_{1},dy_{2}),
 \end{equation}
 where  $\eta_{-}$ (resp.~$\delta_{-}$) is the drift of the subordinator $(L^{-})^{-1}$ (resp.~$-H^{-}$ ) and $\mu_{-}(dy_1,dy_2)$ is the \LL measure of the bivariate subordinator $\left((L^{-})^{-1},-H^{-}\right)$.
The celebrated Wiener-Hopf factorization then reads off as
\begin{equation} \label{eq:wh}
\Psi_q(z) = - \Phi_+(q,-z)\Phi_-(q,z),
\end{equation}
where we set $\Phi_+(1,0)=\Phi_-(1,0)=1$ as the normalization of the local times. We point out that while it can happen that $((L^{+})^{-1},H^{+})$  $\lb \text{resp.~} \lb (L^{-})^{-1},-H^{-})\rb\rb$ can be increasing renewal processes, see \cite[Section 1]{Bingham}, this does not affect our definitions.
As in \cite{Pardo-Patie-Savov} throughout the paper we work with the following set of measures:
\[ \mathcal{ P:
} \quad \textrm{\em the set of positive measures on } \R_{+} \textrm{ \em which admit a non-increasing density.}\]

Our first theorem is the main result in our paper. Equation \eqref{MainAssertion} extends \cite[(1.6), Theorem 1.2]{Pardo-Patie-Savov} to the killed case as well to the case when $\E\left[\xi_{1}\right]=-\infty$ and it is the backbone of all our applications. 
\begin{theorem}\label{MainTheorem}
Let $q\geq0$ and assume that $\xi$ drifts to $-\infty$, when $q=0.$ Then the law of the random variable ${\rm{I}}_{\Psi_{q}}$ is absolutely continuous with density which we denote by $m_{\Psi_{q}}$. Next, assume that one of the following conditions holds:
\begin{enumerate}
\item
\begin{description}
\item [P$+$] $\Pi_+ (dy)=\Pi(dy) \mathbb{I}_{\{y>0\}} \in \mathcal{ P}$,
\end{description}
\item
\begin{description}
\item[P${}^q_{\pm}$]  $\mu_{q_+}(dy)= \int_{0}^{\infty}e^{-q y_{1}}\mu_{+}(dy_{1},dy) \in \mathcal{ P}$, $\mu_{q_-}(dy)= \int_{0}^{\infty}e^{-q y_{1}}\mu_{-}(dy_{1},dy) \in \mathcal{ P}$.
\end{description}
\end{enumerate}
Then, in both cases, there exists an unkilled spectrally positive L\'evy process with a negative mean such that its Laplace exponent $\psi^{q_+}$ takes the
form
\begin{equation}\label{Phi}\psi^{q_+}(-s)=s\Phi_+(q,s)=\delta_{+}s^{2}+ q_+ s+s^{2}\int_{0}^{\infty}e^{-sy}\mu_{q_+}(y,\infty)dy,\: s\geq0,\end{equation}
where  $q_+=k_+ +\eta_{+}q+\int_{0}^{\infty}\int_{0}^{\infty}\left(1-e^{-q y_{1}}\right)\mu_{+}(dy_{1},dy_{2})>0$.
Furthermore, for any $q\geq0$, we have the factorization
\begin{equation}\label{MainAssertion}
{\rm{I}}_{\Psi_q}\stackrel{d}={\rm{I}}_{\phi_{q_-}}\times {\rm{I}}_{\psi^{q_+}},
\end{equation}
where $\times$ stands for the product of independent random variables and  $\phi_{q_-}(z)=-\Phi_-(q,z)$ is the Laplace exponent of a negative of a subordinator which is killed at the rate given by the expression \newline $q_-=k_- +\eta_{-}q+\int_{0}^{\infty}\int_{0}^{\infty}\left(1-e^{-q y_{1}}\right)\mu_{-}(dy_{1},dy_{2})\geq 0$ and $q_-=0$ if and only if $q=0$.
\end{theorem}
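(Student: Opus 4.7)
The strategy avoids the Markovian/stationarity argument of \cite{Pardo-Patie-Savov}, which breaks down both when $\E[\xi_{1}]=-\infty$ and when $q>0$. Instead, I would work at the level of Mellin transforms, after first \emph{lifting} the ascending Wiener--Hopf factor $\Phi_{+}(q,\cdot)$ to the Laplace exponent of a genuine unkilled L\'evy process with finite negative mean via a generalisation of the Chazal et al.\ transformation, and then transfer the factorization back to $\Psi_{q}$ using the Wiener--Hopf identity \eqref{eq:wh}.

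\emph{Step 1 (Building $\psi^{q_+}$).} Fixing $q\geq 0$ in \eqref{BivLadder} and grouping the $q$-dependent terms, one rewrites $\Phi_{+}(q,s)=q_{+}+\delta_{+}s+\int_{0}^{\infty}(1-e^{-sy})\mu_{q_+}(dy)$. Multiplying by $s$ and using the identity $1-e^{-sy}=s\int_{0}^{y}e^{-su}du$ together with Fubini produces the announced form of $\psi^{q_+}(-s)$ displayed in \eqref{Phi}. To recognise it as the spectrally positive L\'evy--Khintchine exponent $q_{+}s+\delta_{+}s^{2}+\int_{0}^{\infty}(e^{-sy}-1+sy)\nu(dy)$, one matches $\int_{y}^{\infty}\nu(u,\infty)\,du=\mu_{q_+}(y,\infty)$; admissibility of $\nu$ as a L\'evy measure reduces to convexity of the tail $y\mapsto\mu_{q_+}(y,\infty)$, which holds under (P${}^{q}_{\pm}$) because $\mu_{q_+}\in\mathcal{P}$, and under (P$+$) via a Vigon-type amalgamation identity relating $\mu_{q_+}$ to $\Pi_{+}$. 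Checking $(\psi^{q_+})'(0)=-q_{+}<0$ confirms that $\psi^{q_+}$ is indeed the Laplace exponent of an unkilled spectrally positive L\'evy process with finite negative mean. The expression for $q_{-}$ and the equivalence $q_{-}=0\Leftrightarrow q=0$ then follow directly from \eqref{BivLadder1}.

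\emph{Step 2 (Mellin factorization).} Since $\psi^{q_+}$ is unkilled with finite negative mean, the Carmona--Petit--Yor functional equation applies to $M_{\psi^{q_+}}(s):=\E[I_{\psi^{q_+}}^{s-1}]$ and yields a rational recursion in $s$ involving $\psi^{q_+}(-s)$. An analogous recursion is obtained for $M_{\phi_{q_-}}$ by a direct computation, conditioning on the killing time, on the (possibly killed) subordinator $-H^{-}$ whose Laplace exponent is $-\phi_{q_-}$. By independence, the product Mellin transform $M_{\phi_{q_-}}\cdot M_{\psi^{q_+}}$ satisfies the multiplied recursion, which, using the Wiener--Hopf identity \eqref{eq:wh} rewritten as $\phi_{q_-}(-s)\psi^{q_+}(-s)=s\Psi_{q}(-s)$, collapses to the functional equation that one establishes independently for $M_{\Psi_{q}}$. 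Normalisation at $s=1$ together with a Carleman-type decay bound on vertical strips, inherited from the three explicit factors, delivers uniqueness and hence \eqref{MainAssertion}. The absolute continuity of $I_{\Psi_{q}}$, stated unconditionally on (P$+$), (P${}^{q}_{\pm}$), is obtained separately by adapting a Bertoin--Lindner--Maller type smoothing argument to the process killed at $\e_{q}$.

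\emph{Main obstacle.} The crucial and most delicate step is establishing the Mellin functional equation for $M_{\Psi_{q}}$ directly in the regime $q>0$ or $\E[\xi_{1}]=-\infty$, precisely where the technique of \cite{Pardo-Patie-Savov} fails. I would resolve this by a two-parameter regularisation: truncate the large jumps of $\xi$ and replace $q$ by $q+\varepsilon$ so that the mean is finite and the classical identity applies to the regularised process, and then let the regularisation parameters tend to $0$, controlling the limit via continuity of the Wiener--Hopf data in the L\'evy triplet and the explicit product representation of the limiting Mellin transform obtained in Step~2.
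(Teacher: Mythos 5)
Your Step 1 and the absolute-continuity claim are sound in outline and broadly match what the paper does (Proposition \ref{prop:twh} for the form of $\psi^{q_+}$, Proposition \ref{lem:ak} for the killed Vigon identity needed under {\bf P}$+$, and the Bertoin--Lindner--Maller argument in Theorem \ref{Th1}). The genuine gap is in Step 2, and it sits exactly where you wave your hands: uniqueness. The functional equation \eqref{eq:rec_sub1} is \emph{not} the obstacle for $q>0$ --- it is already in \cite[Proposition 3.1(i)]{Carmona-Petit-Yor-97} and holds on the strip $0<\Re(z)<\beta_q^{*}$ once exponential moments are secured. The obstacle is that this recursion determines the Mellin transform only up to multiplication by a $1$-periodic function, and your proposed resolution (``a Carleman-type decay bound on vertical strips, inherited from the three explicit factors'') is circular: the decay of $M_{\Psi_q}$ on vertical lines can only be ``inherited'' from the factors \emph{after} the factorization \eqref{MainAssertion} is known, and no a priori bound of this strength is available for the unknown law of ${\rm{I}}_{\Psi_q}$. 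In \cite{Pardo-Patie-Savov} the periodic ambiguity is killed probabilistically, by identifying the law as the unique stationary measure of a generalized Ornstein--Uhlenbeck process --- and, as the introduction of the present paper stresses, that identification is precisely what fails when $q>0$. Your two-parameter regularisation does not repair this: truncating large jumps restores moments but does not remove the killing, so the regularised process is still outside the scope of the unkilled factorization theorem, and you are left needing the very uniqueness statement you have not proved.

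The paper's way around this is a device absent from your proposal: the transform $\mathcal{T}_{\beta}\Psi_q(s)=\frac{s}{s+\beta}\Psi_q(s+\beta)$, which converts the \emph{killed} exponent $\Psi_q$ into the exponent of a genuine \emph{unkilled} L\'evy process with finite negative mean (Proposition \ref{prop:mapping}), commutes with the Wiener--Hopf factorization in the sense of \eqref{eq:cp} (Proposition \ref{prop:twh}), and acts on the exponential functional by an explicit length-biasing, $m_{\Psi_{q}}(x)=\E[{\rm{I}}^{\beta}_{\Psi_{q}}]x^{-\beta}m_{\mathcal{T}_{\beta}\Psi_{q}}(x)$ (Theorem \ref{Th1}); it is \emph{there}, for the proper process $\mathcal{T}_{\beta}\Psi_q$, that the stationary-measure uniqueness of \cite{Pardo-Patie-Savov} is legitimately invoked. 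The factorization is then proved for $\mathcal{T}_{\beta}\Psi_q$ and recovered for $\Psi_q$ by letting $\beta\to0$, with the remaining moment assumptions removed by the tilting/shifting approximations you correctly anticipate. To salvage your Mellin-transform route you would have to supply an independent proof that the recursion plus normalisation plus \emph{provable} regularity of $M_{\Psi_q}$ pins down the law; as written, that step is missing and the argument does not close.
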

\begin{remark}
 \begin{enumerate}
 \item   We mention that when $q=0$, in comparison to \cite[Theorem 1.1]{Pardo-Patie-Savov}, here we also include the case when $\E\left[\xi_{1}\right]=-\infty$. We recall that under such a condition, the functional equation  \eqref{eq:rec_sub1} below does not even hold on the imaginary line $i\R$.
 \item We emphasize that the main factorization identity  \eqref{MainAssertion} allows to build up many examples of two-sided L\'evy processes  for which the density of ${\rm{I}}_{\Psi_q}$ can be described as a convergent power series. This is due to the fact that the exponential functionals on the right-hand side of the identity are easier to study as we have, for instance, simple  expressions for their positive or negative integer moments. More precisely, the positive entire moments of ${\rm{I}}_{\phi_{q_-}}$, for any $q\geq0$, are given in \eqref{eq:ms} below and we have from \cite{Bertoin-Yor-02} that the law of  $1/{\rm{I}}_{\psi^{q_+}}$ is determined by its positive entire moments as follows
\begin{eqnarray} \label{eq:msn}
\E[{\rm{I}}_{\psi^{q_+}}^{-m}] &=&-(\psi^{q_+})'(0^-)\frac{\prod_{k=1}^{m-1}\psi^{q_{+}}(-k)}{\Gamma(m)}, \: m=1,2,\ldots
\end{eqnarray}
with the convention that the right-hand side is $-(\psi^{q_+})'(0^-)$ when $m=1$.
Some specific examples will be detailed  in Section \ref{sec:proof-cor}.
 \item Assuming that we start with bivariate Laplace exponents $\Phi_+$ and
 $\Phi_-$ such that their L\'evy measures satisfy condition {\bf {P}}${}^q_{\pm}$ with $\Phi_+(q,0)\Phi_-(q,0)>0$, then from Lemma \ref{lem:pk} below we can construct a killed L\'evy process with Laplace exponent $\Psi_q$ given by identity \eqref{eq:wh} and such that factorization  \eqref{MainAssertion}  holds.

\item We point out that \eqref{MainAssertion} holds even when $\phi_{q_-}(z)-\phi_{q_-}(0)=0$, for all $z\in i\R$, i.e. when $\xi$ is a subordinator. In this case  $I_{\phi_{q_-}}=\int_{0}^{\e_{q_-}}ds=\e_{q_-}$.
 \end{enumerate}
\end{remark}
We postpone the proof of the theorem to  Section \ref{sec:pm}. We proceed instead by providing some consequences of our factorization identity \eqref{MainAssertion}   concerning some interesting distributional properties of the exponential functional. Before stating the results,  we recall that the density $m$ of  a positive random variable is completely monotone if $m$ is infinitely continuously differentiable and $(-1)^nm^{(n)}(x)\geq0$, for all $x\geq0$ and $n=0,1,\ldots$ . Note in particular, that $m$ is non-increasing and thus the distribution of the random variable is unimodal with mode at $0$, that is its distribution is concave on $[0,\infty)$.
\begin{corollary}\label{cor:main}
\begin{enumerate}[(i)]
\item Let us assume that either condition (1) or (2) of Theorem \ref{MainTheorem} and $|\Psi_{q}(s)|<+\infty$, for $s\in[-1,0]$, holds true. Then, for any $q>0$, such that $\Psi_{q}(-1)\leq0$, the density $m_{\Psi_q}$ is non-increasing, continuous and a.e. differentiable on $\R^+$ with $m_{\Psi_q}(0)=q$.
\item Let $\xi$ be a subordinator with L\'evy measure  $\Pi_+ \in \mathcal{P}$.    Then, for any $q>0$, the density of ${\rm{I}}_{\Psi_q}$ is completely monotone and bounded with $m_{\Psi_q}(0)=q$. Moreover,  recalling that, in this case, the drift  $b$ of $\xi$ is non-negative, we have, for any $x<1/b$ (with the convention that $1/0=+\infty$), that
\[m_{\Psi_q}(x)=\sum_{n=0}^{\infty} a_n\left(\Psi_{q}\right)\frac{(-x)^n}{n!},\]
where $a_n\left(\Psi_{q}\right) = q\prod_{k=1}^{n}-\Psi_{q}(-k)$ with $a_0\left(\Psi_{q}\right) = q$. If $b>0$, we  have for any $x>0$
\begin{equation} \label{eq:pse}
m_{\Psi_q}(x)=(1+bx)^{-1}\sum_{n=0}^{\infty} \tilde{a}_n\left(\Psi_{q}\right)\left(\frac{bx}{bx+1}\right)^n,
\end{equation}
where $\tilde{a}_n\left(\Psi_{q}\right)= \sum_{k=0}^n\frac{a_k\left(\Psi_{q}\right)}{k!(n-k)!}$.

\item Let $\xi$ be a spectrally positive L\'evy process and we denote,  for any $q>0$, by  $\gamma_q$  the only positive root of the equation $\Psi_q(-s)=0$. Then, we have
\begin{equation}\label{eq:hy}
{\rm{I}}_{\Psi_q}\stackrel{d}= B^{-1}(1,\gamma_q
) \times {\rm{I}}_{\psi^{q_+}},
\end{equation}
where  $B(1,\gamma_q)$ is a Beta random variable and $\psi^{q_+} (z)=z\phi_{q_+}(z) $. Moreover, if $\Psi_q(-1)\leq0$ then ${\rm{I}}_{\Psi_q}$ has a non-increasing density.
\item Finally, let   $\xi$ be a spectrally negative L\'evy process. Denoting here, for any $q>0$, by  $\gamma_q$  the only positive root of the equation $\Psi_q(s)=0$,  we have
\begin{equation}\label{SpecNeg1}
m_{\Psi_{q}}(x)=\frac{x^{-\gamma_q-1}}{\Gamma(\gamma_q)}\int_{0}^{\infty}e^{-y/x}y^{\gamma_q}m_{\phi_{q_-}}(y)dy, \: x>0,
\end{equation}
where $\Gamma$ stands for the Gamma function. In particular, we get the precise asymptotic for the density at infinity, i.e.
\[\lim_{x\rightarrow \infty }x^{\gamma_q+1}m_{\Psi_q}(x) = \frac{\E\left[{\rm{I}}_{\phi_{q_-}}^{\gamma_q}\right]}{\Gamma(\gamma_q)}. \]
Then, for any $\beta\geq\gamma_q+1$, the mapping $x\mapsto x^{-\beta}m_{\Psi_q}(x^{-1})$ is completely monotone. In particular, the density of the random variable ${\rm{I}}^{-1}_{\Psi_q}$ is completely monotone, whenever $\gamma_q \leq 1$.
\end{enumerate}
\end{corollary}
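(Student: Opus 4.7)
The unifying strategy for all four parts is to exploit the factorization $I_{\Psi_q}\stackrel{d}=I_{\phi_{q_-}}\cdot I_{\psi^{q_+}}$ from Theorem \ref{MainTheorem} together with the Mellin-convolution representation for the density of a product of independent positive random variables,
\[
m_{\Psi_q}(z)=\int_{0}^{\infty}m_{\phi_{q_-}}(z/y)\,\frac{m_{\psi^{q_+}}(y)}{y}\,dy,
\]
identifying one of the two factors explicitly in each setting and reading off the assertion from its structure.

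For (i), I would first argue that $m_{\phi_{q_-}}$ is itself non-increasing under either hypothesis of Theorem \ref{MainTheorem}: under {\bf P}$^q_\pm$ this is the standard fact that the exponential functional of a killed subordinator whose L\'evy measure lies in $\mathcal{P}$ has a completely monotone (in particular non-increasing) density, while under {\bf P}$+$ an analogous conclusion follows after using Vigon-type amical identities to transfer $\Pi_+\in\mathcal{P}$ into a monotonicity property of $\mu_{q_-}$. The displayed convolution then inherits the non-increasing property because $z\mapsto m_{\phi_{q_-}}(z/y)$ is non-increasing for every $y>0$; continuity and a.e.\ differentiability are then obtained via dominated convergence. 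Evaluating the convolution at $z=0^+$ yields $m_{\Psi_q}(0^+)=m_{\phi_{q_-}}(0^+)\,\E\!\left[I_{\psi^{q_+}}^{-1}\right]=q_-\cdot q_+=q$, where the three equalities use, respectively, the behaviour at $0$ of the killed-subordinator exponential functional, the moment identity \eqref{eq:msn} with $m=1$ together with $(\psi^{q_+})'(0^-)=-q_+$, and the Wiener--Hopf identity \eqref{eq:wh} at $z=0$.

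For (ii), $\xi$ is a subordinator, so the descending-ladder factor is trivial and $I_{\phi_{q_-}}\stackrel{d}=\e_q$. After the substitution $w=1/y$ the convolution formula rewrites $m_{\Psi_q}$ as a Laplace transform of a positive measure in the variable $qx$, which immediately yields both complete monotonicity and the bound $m_{\Psi_q}(0)=q$. The power series with coefficients $a_n(\Psi_q)=q\prod_{k=1}^n(-\Psi_q(-k))$ is obtained by Taylor expansion at the origin, the coefficients being identified through the classical moment recursion for exponential functionals of subordinators; a ratio test using $-\Psi_q(-k)/k\to b$ fixes the radius of convergence at $1/b$, and when $b>0$ a Borel-type resummation in the uniformising variable $w=bx/(bx+1)\in[0,1)$ extends the representation to all $x>0$ in the form \eqref{eq:pse}. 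For (iii), spectral positivity renders $\Phi_-(q,z)=z+\gamma_q$ linear (the infimum process being continuous in the absence of negative jumps), so $I_{\phi_{q_-}}=\int_{0}^{\e_{\gamma_q}}e^{-t}dt=1-e^{-\e_{\gamma_q}}$; a direct CDF computation identifies this as the Beta factor appearing in \eqref{eq:hy}. Under $\Psi_q(-1)\leq0$, which is equivalent to $\gamma_q\geq1$ by convexity of $s\mapsto\Psi_q(-s)$, this factor has a non-increasing density, so part~(i) applied to the product delivers the second assertion.

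For (iv), spectral negativity symmetrically makes $\psi^{q_+}(z)=z(z-\gamma_q)$ the Laplace exponent of a drifted Brownian motion; Dufresne's classical identity (verified against \eqref{eq:msn}) then gives $I_{\psi^{q_+}}\stackrel{d}=1/\Gamma_{\gamma_q}$ with $\Gamma_{\gamma_q}\sim\mathrm{Gamma}(\gamma_q,1)$. Substituting this explicit inverse-Gamma density into the convolution formula and changing variable produces \eqref{SpecNeg1}, after which the asymptotic follows by dominated convergence. For the complete monotonicity claim, the substitution $x\mapsto1/x$ in \eqref{SpecNeg1} gives
\[
x^{-\beta}m_{\Psi_q}(1/x)=\frac{x^{\gamma_q+1-\beta}}{\Gamma(\gamma_q)}\int_{0}^{\infty}e^{-yx}\,y^{\gamma_q}m_{\phi_{q_-}}(y)\,dy;
\]
the integral is a Laplace transform of a positive measure, hence completely monotone in $x$, while $x^{\gamma_q+1-\beta}$ is completely monotone whenever $\gamma_q+1-\beta\leq0$, so the product is completely monotone as soon as $\beta\geq\gamma_q+1$. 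Taking $\beta=2$ and using the change-of-variable identity $f_{I_{\Psi_q}^{-1}}(y)=y^{-2}m_{\Psi_q}(1/y)$ delivers the last statement whenever $\gamma_q\leq1$. The principal technical obstacle is the non-increasing-density statement for $I_{\phi_{q_-}}$ in part~(i) under hypothesis {\bf P}$+$: propagating $\Pi_+\in\mathcal{P}$ to the required monotonicity of $\mu_{q_-}$ via Vigon-type friendship equations is the only non-mechanical step, the rest of the argument being routine once the right factor has been identified.
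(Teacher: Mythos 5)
Parts (ii), (iii) and (iv) of your argument are essentially correct and follow the paper's route: the $\e\times {\rm{I}}_{\psi^{q_+}}$ mixture-of-exponentials argument plus the moment recursion and Euler resummation for (ii), the explicit $1-e^{-\e_{\gamma_q}}$ descending factor for (iii), and the inverse-Gamma identification of ${\rm{I}}_{\psi^{q_+}}$ leading to \eqref{SpecNeg1} for (iv) (which the paper only sketches by reference to \cite{Pardo-Patie-Savov}; your computation is a legitimate filling-in). There is, however, a genuine gap in part (i), and it propagates into the ``moreover'' clause of (iii) to the extent that you invoke (i) there.

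The gap is your justification that $m_{\phi_{q_-}}$ is non-increasing. You derive it from structural conditions on $\mu_{q_-}$: under ${\bf{P}}^q_{\pm}$ you invoke a ``standard fact'' that $\mu_{q_-}\in\mathcal{P}$ forces a completely monotone density for ${\rm{I}}_{\phi_{q_-}}$, and under ${\bf{P}}+$ you propose to transfer $\Pi_+\in\mathcal{P}$ to $\mu_{q_-}$ via Vigon-type identities. Both steps fail. The \'equation amicale (Proposition~\ref{lem:ak}) expresses the \emph{ascending} ladder measure $\mu_{q_+}$ through the \emph{positive} jumps $\overline{\Pi}_+$; the descending ladder measure $\mu_{q_-}$ is governed by the negative jumps, which are completely unconstrained under ${\bf{P}}+$, so no monotonicity of $\mu_{q_-}$ can be extracted. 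Worse, the ``standard fact'' itself is false: for a spectrally positive $\xi$ one has $\mu_{q_-}\equiv 0\in\mathcal{P}$ and ${\rm{I}}_{\phi_{q_-}}=1-e^{-\e_{\gamma_q}}$ is Beta$(1,\gamma_q)$, whose density $\gamma_q(1-x)^{\gamma_q-1}$ is \emph{increasing} on $(0,1)$ whenever $\gamma_q<1$ (and, being compactly supported, is never completely monotone). The telltale sign is that your argument for (i) never uses the hypothesis $\Psi_q(-1)\leq 0$, which is exactly what rules out such examples. The paper's mechanism is different: $\Psi_q(-1)\leq 0$ together with \eqref{eq:wh1} gives $\phi_{q_-}(-1)\leq 0$, so $\tilde{\phi}_{q_-}(s)=\phi_{q_-}(s-1)$ is again the Laplace exponent of a negative of a (possibly killed) subordinator; comparing the moments \eqref{eq:ms} of ${\rm{I}}_{\phi_{q_-}}$ with those of ${\rm{I}}_{\mathcal{T}_1\tilde{\phi}_{q_-}}$ yields ${\rm{I}}_{\phi_{q_-}}\stackrel{d}{=}U\times{\rm{I}}_{\mathcal{T}_1\tilde{\phi}_{q_-}}$ with $U$ uniform on $(0,1)$, and Khintchine's theorem then gives the non-increasing property, with no condition whatsoever on $\mu_{q_-}$. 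Your evaluation $m_{\Psi_q}(0)=q_-q_+=q$ and the continuity/a.e.-differentiability claims are fine once this factorization (and the resulting boundedness and integral representation $m_{\Psi_q}(x)=\int_x^\infty m_V(y)\,dy/y$) is in place, but as written part (i) does not go through.
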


\begin{remark} \label{rem:cor_main}
\begin{enumerate}
\item We point out that a positive random variable with a completely monotone density is in particular infinitely divisible, see \cite[Theorem 51.6]{Sato-99}.
\item A positive  random variable with a non-increasing  density is strongly multiplicative unimodal (for short SMU), that is the product of this random variable with any independent positive random variable is unimodal and in this case the product has its  mode at $0$, see \cite[Proposition 3.6]{Cuculescu}.
\item We mention that the two cases (ii) and (iii), i.e. when the L\'evy process has no negative jumps, have not been studied in the literature so far.
\item One may recover from item (iv) the expression of the density found in \cite{Patie-abs-08} in this case.
Furthermore, we point out that in \cite{Patie-abs-08} it is proved that the density extends actually to a function  of a complex variable which is analytical  in the entire complex plane cut along the negative real axis and admits a power series representation for all $x>0$.
\item Note that the series \eqref{eq:pse} is easily amenable to numerical computations since $a_k\lb\Psi_{q}\rb$ can be computed recurrently. We stress that \eqref{eq:pse} would be difficult to get from \eqref{eq:rec_sub1} because the functional equation holds on a strip in the complex plane which might explain why such simple series has not yet appeared in the literature. 
\end{enumerate}
\end{remark}
The proof of this corollary and of the following one will be given in Section \ref{sec:proof-cor}. We will also describe therein some examples illustrating these results. As a by-product of Corollary \ref{cor:main}, we get the following new distributional properties for the law of maximum and first passage times of some stable L\'evy processes.
\begin{corollary}\label{cor:stable}
Let $Z=(Z_t)_{t\geq0}$ be an $\alpha$-stable L\'evy process starting from $0$ with $\alpha \in (0,2]$. Let us write $S_1 = \sup_{0\leq t\leq 1 } Z_t $ and $T_1=\inf\{t>0; \:  Z_t \geq 1\}$ and recall that the scaling property of $Z$ yields the identity $T_1 \stackrel{d}{=}S_1^{-\alpha}$. Then, writing $\rho=\P(Z_1>0)$, we have the following claims:
\begin{enumerate}[(i)]
\item The density of $T_1$ is bounded and non-increasing for any $\alpha \in (0,1)$ and $\rho \in \left(0,\frac{1}{\alpha}-1\right]$. In particular, this property holds true for any $\alpha \in (0, \frac{1}{2}] $ and for symmetric stable L\'evy processes, i.e.~$\rho=\frac{1}{2}$, with $\alpha \in (0, \frac{2}{3}]$.
 \item The density of $S_1^{\alpha}$ is completely monotone if $\alpha \in (1,2]$  and  $ \rho = 1-\frac{1}{\alpha}$, that is when $Z$ is spectrally positive.
\end{enumerate}
\end{corollary}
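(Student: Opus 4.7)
The strategy is to combine the classical Lamperti identification of the first passage time $T_1$ as an exponential functional of a Lamperti-stable Lévy process with the density properties of Corollary~\ref{cor:main}. Writing $\tilde Z_t = 1 - Z_t$, the process $(\tilde Z_t,\, 0\le t< T_0^-(\tilde Z))$ started from $1$ is a positive $\alpha$-self-similar Markov process whose lifetime coincides with $T_1$; by the Lamperti transform (in the form developed by Caballero--Chaumont) it corresponds to a killed Lévy process $\xi^{(\alpha,\rho)}$ whose positive- and negative-jump densities have the Lamperti-stable form
\[ c_+\,\frac{e^x}{(e^x-1)^{\alpha+1}}\Id{x>0} \quad\text{and}\quad c_-\,\frac{e^x}{(1-e^x)^{\alpha+1}}\Id{x<0}, \]
with non-negative constants $c_\pm(\alpha,\rho)$, and whose Laplace exponent $\Psi_q$ is a tractable ratio of gamma functions. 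The Lamperti time change yields the identification $T_1\stackrel{d}{=} I_{\Psi_q}$. Because the jumps of $\tilde Z$ are the reflections of those of $Z$, one has the correspondence: $Z$ is spectrally positive iff $c_+=0$ iff $\xi^{(\alpha,\rho)}$ is spectrally negative.

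For part (i), I would first observe that the positive-jump density above is strictly decreasing on $(0,\infty)$ for every $\alpha>0$ (a short calculation after the substitution $u=e^x-1$), so condition $\mathbf{P+}$ of Theorem~\ref{MainTheorem} is fulfilled automatically. It then suffices, in order to invoke Corollary~\ref{cor:main}(i), to verify $\Psi_q(-1)\le 0$; using the explicit gamma-function form of $\Psi_q$ together with the reflection identity $\Gamma(z)\Gamma(1-z)=\pi/\sin(\pi z)$, the inequality $\Psi_q(-1)\le 0$ simplifies to $\alpha(1+\rho)\le 1$, that is $\rho\le \frac{1}{\alpha}-1$; note that this requires $\alpha<1$ for the admissible range of $\rho$ to be non-empty. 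Corollary~\ref{cor:main}(i) then yields that the density of $T_1$ is bounded and non-increasing. The two further consequences stated in the claim follow by direct specialisation: at $\alpha\le \tfrac12$ one has $\frac{1}{\alpha}-1\ge 1$, which covers every admissible $\rho$; and the symmetric case $\rho=\tfrac12$ corresponds exactly to $\alpha\le \tfrac23$.

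For part (ii), $Z$ being spectrally positive makes $\xi^{(\alpha,\rho)}$ spectrally negative, which is precisely the setting of Corollary~\ref{cor:main}(iv). Evaluating the explicit $\Psi_q$ one verifies that its unique positive root is $\gamma_q=\alpha-1$, which lies in $(0,1]$ precisely for $\alpha\in(1,2]$. Corollary~\ref{cor:main}(iv) then guarantees complete monotonicity of the density of $I_{\Psi_q}^{-1}$, and the scaling identity $S_1^{\alpha}\stackrel{d}{=} T_1^{-1}\stackrel{d}{=} I_{\Psi_q}^{-1}$ transfers this property to $S_1^\alpha$, as required.

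The chief technical obstacle is the pair of explicit gamma-function verifications $\Psi_q(-1)\le 0\Leftrightarrow \rho\le \frac{1}{\alpha}-1$ and $\gamma_q=\alpha-1$; both are elementary, but they do require having the Lamperti-stable Laplace exponent of $\xi^{(\alpha,\rho)}$ written out in its explicit ratio-of-gammas form, and careful use of the reflection identity. Once these identifications are in place, the relevant items of Corollary~\ref{cor:main} deliver the claimed density properties directly.
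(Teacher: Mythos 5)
Your proposal is correct and follows essentially the same route as the paper: identify $T_1$ with the exponential functional of the killed Lamperti-stable L\'evy process, check condition $\mathbf{P+}$ via the decreasing positive-jump density $e^y(e^y-1)^{-\alpha-1}$, reduce $\Psi_q(-1)\leq 0$ to $\alpha(1+\rho)\leq 1$ and invoke Corollary \ref{cor:main}(i), then use the spectrally negative case and Corollary \ref{cor:main}(iv) for part (ii). One small normalization slip: in the parametrization for which $T_1\stackrel{d}{=}{\rm{I}}_{\Psi_q}$ the Laplace exponent carries the argument $\alpha z$, so the positive root is $\gamma_q=1-\frac{1}{\alpha}\in(0,\frac{1}{2}]$ rather than your $\alpha-1$ (the root of the unrescaled Caballero--Chaumont exponent); this does not affect the conclusion since both values lie in $(0,1]$ for $\alpha\in(1,2]$.
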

\begin{remark}
When $\alpha \in (0,1)$ and $\rho=1$, that is $Z$ is a stable subordinator, we have the obvious identity $Z_1\stackrel{d}{=}S_1$. Thus, we get  from the first item that the density of $Z_1^{-\alpha}$ is non-increasing on $\R^+$ if $\alpha\in (0,\frac{1}{2}]$. From Remark \ref{rem:cor_main} (2) this means that for these values of $\alpha$,  $Z_1^{-\alpha}$  is SMU with mode at $0$. Note that this result is consistent with the main result of Simon in \cite{Simon} where it is shown that $Z_1$ is SMU with a positive mode if and only if $\alpha\in (0,\frac{1}{2}]$. The positivity of the mode implies that any non-zero real power of $Z_1$, and in particular $T_{1}\stackrel{d}=Z^{-\alpha}_{1}$, is SMU, see e.g.~\cite[Section 1]{Simon}.
\end{remark}

\section{Proof of Theorem \ref{MainTheorem}} \label{sec:pm}

\subsection{The case ${\bf{P}}+$}\label{Sec2}

We start by extending to two-sided L\'evy processes a transformation which has been introduced in \cite{Chazal-al-10} in the framework of spectrally negative L\'evy processes. This mapping turns out to be very useful in the context of both  the Wiener-Hopf factorization of  L\'evy processes and their exponential functionals.  For its statement we need the following notation
  \[\overline{\Pi}_-(y) =  \int_{-\infty}^y\Pi(dr) \mathbb{I}_{\{y<0\}} \textrm{ and }\:\overline{\Pi}_+(y) =  \int^{\infty}_y\Pi(dr) \mathbb{I}_{\{y>0\}} \]
and the following assumption:
 \[{\bf{T_{\beta^+}}}: \: \textrm{There exists } \beta^{+}>0 \textrm{ such that for all } \beta \in [0,\beta^{+}), \: |\Psi(\beta)|<+\infty \textrm{ and   } e^{\beta y}\overline{\Pi}_+(y)dy \in \mathcal{ P}.\]
Also if $\Psi$ satisfies ${\bf{T_{\beta^+}}}$, we write, for all $q\geq 0$,
 \begin{equation}\label{eq:db}
 \beta_q^{*}=\sup\{\beta>0;\: \Psi(\beta)-q<0\}\wedge\beta_{+}.\end{equation}
 \begin{proposition} \label{prop:mapping}
Let  us assume that ${\bf {T_{\beta^+}}}$ holds. Then, for any $\beta \in  (0,\beta_+)$, the linear mapping $\mathcal{T}_{\beta}$ defined by
\begin{eqnarray*}
\mathcal{T}_{\beta}\Psi_{q}(s) &=& \frac{s}{s+\beta}\Psi_{q}(s+\beta), \: s \in (-\beta,\beta_+-\beta),
\end{eqnarray*}
 is the Laplace exponent of an unkilled L\'evy process $\xi^{(\beta,q)}=(\xi^{(\beta,q)}_t)_{t\geq0}$ with Gaussian coefficient $\frac{\sigma^2}{2}$ and  L\'evy measure  $ \Pi^{\beta}$ given by
 \begin{equation} \label{eq:ct_t}
 \Pi^{\beta}(dy)=e^{\beta y}\left(\Pi(dy)-\beta\overline{\Pi}_+(y)dy +\lb\beta\overline{\Pi}_-(y)+q\beta\rb dy\mathbb{I}_{\{y<0\}}\right).
\end{equation}
Furthermore, if we assume that $\xi$ drifts to $-\infty$ when $q=0$, then for any $q\geq0$, $\beta_q^{*}>0$, and, for any $\beta \in (0,\beta_q^{*})$, we have
\begin{equation}\label{Mean1}
-\infty<\Ec{\xi^{(\beta,q)}_{1}}=\frac{\Psi(\beta)-q}{\beta}<0.
\end{equation}
\end{proposition}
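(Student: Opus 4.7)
The plan is to identify $F_{\beta}(s):=\frac{s}{s+\beta}\Psi_{q}(s+\beta)$ as the L\'evy--Khintchine exponent of an unkilled L\'evy process with Gaussian coefficient $\sigma^{2}/2$ and L\'evy measure $\Pi^{\beta}$, by expanding both sides in L\'evy--Khintchine form and matching term by term, and then to read off the mean by a direct differentiation at $s=0$.

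First I would check that $\Pi^{\beta}$ is a bona fide L\'evy measure. Non-negativity on $(0,\infty)$ comes from ${\bf T_{\beta^+}}$: since $y \mapsto e^{\beta y}\PPp(y)$ is non-increasing, the Stieltjes identity
\begin{equation*}
d(e^{\beta y}\PPp(y)) = \beta e^{\beta y}\PPp(y)\,dy - e^{\beta y}\Pi(dy)
\end{equation*}
defines a non-positive measure on $(0,\infty)$, giving $\Pi(dy) \geq \beta \PPp(y)\,dy$ there. Non-negativity on $(-\infty,0)$ is manifest since all constituent terms are positive. Integrability $\int(y^{2}\wedge 1)\Pi^{\beta}(dy)<\infty$ follows near $0$ from the same property for $\Pi$, and for large $|y|$ from $|\Psi(\beta)|<\infty$ (which both controls $\int_{1}^{\infty}e^{\beta y}\Pi(dy)$ and, via monotonicity, forces $e^{\beta y}\PPp(y)\to 0$ at $+\infty$).

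Next I would decompose $F_{\beta}$ contribution by contribution. The Gaussian and drift pieces of $\Psi_{q}(s+\beta)$, multiplied by $\frac{s}{s+\beta}$, produce $\frac{\sigma^{2}}{2}s^{2}+(b+\tfrac{\sigma^{2}\beta}{2})s$, which isolates the Gaussian coefficient $\sigma^{2}/2$ and a first linear contribution. The killing term, via $\frac{s}{s+\beta}=1-\frac{\beta}{s+\beta}$, $\frac{\beta}{s+\beta}=\beta\int_{0}^{\infty}e^{-(s+\beta)u}du$ and the substitution $y=-u$, becomes
\begin{equation*}
-q\,\frac{s}{s+\beta} = \int_{-\infty}^{0}(e^{sy}-1)\,q\beta e^{\beta y}\,dy,
\end{equation*}
which is precisely the L\'evy integral against the $q\beta e^{\beta y}\mathbb{I}_{\{y<0\}}$ piece of $\Pi^{\beta}$. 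For the compensated integral against $\Pi$, writing $e^{(s+\beta)y}-1=(s+\beta)\int e^{(s+\beta)u}du$ on each half-line and applying Fubini reduces $\frac{s}{s+\beta}\int(e^{(s+\beta)y}-1)\Pi(dy)$ to $\pm s\int e^{(s+\beta)u}\PP_{\pm}(u)du$; a second integration by parts based on the Stieltjes identity above (and its counterpart on $(-\infty,0)$) then gives the clean formula
\begin{equation*}
s\int_{0}^{\infty}e^{(s+\beta)u}\PPp(u)du = \int_{0}^{\infty}(e^{sy}-1)\Pi^{\beta}(dy),
\end{equation*}
and symmetrically on $(-\infty,0)$, where the killing density already gets absorbed into $\Pi^{\beta}$. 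Reintroducing the compensator $-sy\mathbb{I}_{|y|<1}$ against $\Pi^{\beta}$ costs a finite linear-in-$s$ correction which combines with the Gaussian, drift, and original-compensator contributions into a single drift $b'$. The resulting identity is the L\'evy--Khintchine representation of $F_{\beta}$, and $F_{\beta}(0)=0$ is automatic from the factor $s$ together with $|\Psi_{q}(\beta)|<\infty$, confirming the absence of killing.

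Finally, the mean follows from the quotient rule: $F_{\beta}'(0) = \Psi_{q}(\beta)/\beta = (\Psi(\beta)-q)/\beta$, finite since $|\Psi(\beta)|<\infty$ for $\beta\in(0,\beta_{+})$. Strict negativity for $\beta\in(0,\beta_{q}^{*})$ is the definition of $\beta_{q}^{*}$, while $\beta_{q}^{*}>0$ follows when $q>0$ from $\Psi(0)-q=-q<0$ and continuity, and when $q=0$ from the fact that a L\'evy process drifting to $-\infty$ has $\Psi'(0^{+})\leq 0$ (possibly $-\infty$), so that by convexity of $\Psi$ on $[0,\beta_{+})$ with $\Psi(0)=0$ one has $\Psi(\beta)<0$ for small $\beta>0$. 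The main technical obstacle is the compensator bookkeeping in the middle paragraph: several linear-in-$s$ terms arise from the Gaussian, the original compensator, the change of compensator on each half-line, and the killing, and they must be tracked carefully so as to collapse into a single drift $b'$ without spurious residues.
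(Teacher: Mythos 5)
Your proof is correct and follows essentially the same route as the paper's: both identify $\mathcal{T}_{\beta}\Psi_q$ in L\'evy--Khintchine form by integrating by parts against the tail functions (the paper delegates the spectrally negative half to \cite[Proposition 2.2]{Chazal-al-10} while you carry out the computation on both half-lines), both recognize the killing term as the exponent of a compound Poisson process with exponential jumps on the negative half-line, and both obtain the mean by differentiating at $0$. The only nitpick is in the last step: drifting to $-\infty$ together with ${\bf{T_{\beta^+}}}$ forces $\Psi'(0^+)<0$ strictly (possibly $-\infty$), which is what you actually need, whereas your ``$\Psi'(0^{+})\leq 0$ plus convexity'' as written would not rule out the case $\Psi'(0^{+})=0$, where convexity gives $\Psi(\beta)\geq 0$.
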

\begin{proof}
First, by linearity of the mapping $\mathcal{T}_{\beta}$, one gets
\begin{equation}\label{eq:cp1}
\mathcal{T}_{\beta}\Psi_q (z) = \mathcal{T}_{\beta}\Psi(z)- q\frac{z}{z+\beta},
\end{equation}
where we recognize, on the right-hand side, the  Laplace exponent of a negative of a compound Poisson process with parameter $q>0$, whose jumps are exponentially distributed with parameter $\beta>0$. Next we observe that  one can write
\begin{equation} \label{eq:dp}
\Psi(z)=\Psi_-(z)+\Psi_+(z),
 \end{equation}
where $\Psi_+(z)=\int_{0}^{\infty}\left(e^{zy} - 1 - zy\Id{|y|<1}\right)\Pi_+(dy)$ and $\Psi_-$ is the Laplace exponent of a   L\'evy process without positive jumps. The description of $\mathcal{T}_{\beta}\Psi_-$ as the Laplace exponent of a L\'evy process without positive jumps follows from  \cite[Proposition 2.2]{Chazal-al-10}. Thus, from the linearity of the transform it remains to study its effect on $\Psi_+$.
An integration by parts gives us that
\begin{eqnarray*}
\Psi_+(z) &=& z\left(\int^{\infty}_{0}\left(e^{zy}  -\Id{|y|<1}\right)\overline{\Pi}_+(y)dy +\overline{\Pi}_+(1)\right).
\end{eqnarray*}
Then
\begin{eqnarray*}
\mathcal{T}_{\beta}\Psi_+(z) &=& \frac{z}{z+\beta}\Psi_+(z+\beta)= z\left(\int^{\infty}_{0}\left(e^{(z+\beta)y} - \Id{|y|<1}\right)\overline{\Pi}_+(y)dy+\overline{\Pi}_+(1)\right)\\
&=& z\left(\int^{\infty}_{0}\left(e^{z y} - \Id{|y|<1}\right)e^{\beta y}\overline{\Pi}_+(y)dy+\int^{1}_{0}\left(e^{\beta y} - 1\right)\overline{\Pi}_+(y)dy+\overline{\Pi}_+(1)\right)\\
&=&\int^{\infty}_{0}\left(e^{z y} -1-zy\Id{|y|<1}\right)\left(-e^{\beta y}\overline{\Pi}_+(y)\right)'dy \\ &+&z\left(\int^{1}_{0}\left(e^{\beta y} - 1\right)\overline{\Pi}_+(y)dy+\overline{\Pi}_+(1)(1-e^{\beta})\right),
\end{eqnarray*}
which provides the expression \eqref{eq:ct_t} since the mapping $y \mapsto e^{\beta y}\overline{\Pi}(y)$ is non-increasing by assumption and plainly the  condition ${\bf {T_{\beta^+}}}$  gives that $\int_0^{\infty}(1\wedge y^2)(-e^{\beta y}\overline{\Pi}_+(y))'dy<+\infty$. Next, when $q>0$ then $\beta_q^{*}>0$ since $\Psi(0)=0$ and the mapping $s\mapsto\Psi(s)$ is continuous on $[0,\beta^+)$. When $q=0$,  the condition ${\bf {T_{\beta^+}}}$ combined with the fact that $\xi$ drifts to $-\infty$ implies that $\Psi'(0+)<0$, where $\Psi'(0+)$ can be $-\infty$. Clearly then  we have that $\Psi(\beta)<0$,  for any $\beta \in (0,\epsilon)$ and some $\epsilon>0$, and hence $\beta_0^{*}>0$. Moreover, we observe that, for any $q\geq0$,
\[(\mathcal{T}_{\beta}\Psi_q)'(0^+) = \frac{\Psi(\beta)-q}{\beta},\]
which is clearly finite and negative, for any $\beta \in (0,\beta_q^{*})$.  The proof of the proposition is completed.
\end{proof}
\begin{remark}
We note that, for any $q>0$ and  any $0<\beta<\beta^+$,   the L\'evy process $\xi^{(\beta,q)}$ can be decomposed as $\xi^{(\beta,q)}_t=\xi^{\beta}_t-N^{q}_t$, where $(\xi^{\beta}_t)_{t\geq0}$ is a L\'evy process with Laplace exponent $\mathcal{T}_{\beta}\Psi$ and $(N^{q}_t)_{t\geq0}$ is  an independent compound Poisson process with parameter $q$ whose jumps are exponentially distributed with parameter $\beta$.
\end{remark}
We shall need  the following alternative representation of the bivariate ladder exponents as well as an interesting application of the transform $\Tb$ in the context of  the Wiener-Hopf factorization of L\'evy processes.
\begin{proposition}\label{prop:twh}
For any $q>0$, we have  $\phi_{q_+}(z)=-\Phi_{+}(q,-z)$ and $\phi_{q_-}(z)=-\Phi_{-}(q,z)$, where $\phi_{q_+}$ (resp.~$\phi_{q_-}$) is the Laplace exponent of  (resp.~the negative of ) a subordinator. More precisely, they take the form
 \begin{eqnarray*}\label{BivLadder1}
\phi_{q_+}(z) &=&-q_+ +\delta_{+}z+\int_{0}^{\infty}\left(e^{z y}-1\right)\mu_{q_+}(dy),\\
\phi_{q_-}(z) &=&-q_- -\delta_{-}z-\int_{0}^{\infty}\left(1-e^{-z y}\right) \mu_{q_-}(dy),
 \end{eqnarray*}
 where we recall that $\mu_{q_\pm}(dy)=\int_{0}^{\infty}e^{-qy_1}\mu_{\pm}(dy_{1},dy)$ and $q_{\pm}=k_{\pm}+\eta_{\pm}q+\int_{0}^{\infty}\int_{0}^{\infty}\left(1-e^{-q y_{1}}\right)\mu_{\pm}(dy_{1},dy_{2})>0$.
Consequently, the Wiener-Hopf factorization \eqref{eq:wh} takes the form
\begin{equation} \label{eq:wh1}
\Psi_q(z) = - \phi_{q_+}(z)\phi_{q_-}(z).
\end{equation}
Moreover, assume that ${\bf{T_{\beta^+}}}$ holds and that $\xi$ drifts to $-\infty$ when $q=0$.  Then, for any $\beta\in (0,\beta_q^{*})$, we have
\begin{equation} \label{eq:cp}
\mathcal{T}_{\beta}\Psi_q (z) = -\phi_{q_+}(z+\beta)  \mathcal{T}_{\beta}\phi_{q_-}(z).
\end{equation}
\end{proposition}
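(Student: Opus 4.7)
The plan is to first establish the explicit forms of $\phi_{q_\pm}$ by direct algebraic manipulation of the bivariate ladder exponents \eqref{BivLadder} and \eqref{BivLadder1}, then to deduce \eqref{eq:wh1} by substitution into \eqref{eq:wh}, and finally to obtain \eqref{eq:cp} as an immediate rearrangement. I would begin by splitting the double integral in \eqref{BivLadder} via the elementary identity $1 - e^{-a-b} = (1 - e^{-a}) + e^{-a}(1 - e^{-b})$ applied with $a = qy_1$ and $b = sy_2$, which gives
\[\int_0^\infty\!\!\int_0^\infty \left(1 - e^{-(qy_1 + sy_2)}\right)\mu_+(dy_1, dy_2) = \int_0^\infty\!\!\int_0^\infty \left(1 - e^{-qy_1}\right)\mu_+(dy_1, dy_2) + \int_0^\infty \left(1 - e^{-sy_2}\right)\mu_{q_+}(dy_2).\]
Collecting the $s$-free contributions into the constant $q_+$ defined in the statement yields $\Phi_+(q, s) = q_+ + \delta_+ s + \int_0^\infty(1 - e^{-sy})\mu_{q_+}(dy)$, whence $\phi_{q_+}(z) = -\Phi_+(q, -z)$ has the announced form; the argument for $\phi_{q_-}$ is identical \emph{mutatis mutandis}.

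Next I verify that $\mu_{q_\pm}$ is a genuine Lévy measure of a subordinator, i.e.~$\int_0^\infty (1 \wedge y)\mu_{q_\pm}(dy) < \infty$. This follows from the admissibility of $\mu_\pm$ as the Lévy measure of a bivariate subordinator by splitting the integrand $(1 \wedge y_2) e^{-qy_1}$ over $\{y_1 + y_2 \leq 1\}$, where $(1 \wedge y_2) \leq y_1 + y_2$, and $\{y_1 + y_2 > 1\}$, where the integrand is bounded by the indicator of a set of finite $\mu_\pm$-mass. Strict positivity $q_\pm > 0$ when $q > 0$ is then immediate, since the nondegeneracy of $(L^\pm)^{-1}$ forces either $\eta_\pm > 0$ or $\mu_\pm \neq 0$, and in both cases the killing contribution $\eta_\pm q + \int\!\int(1 - e^{-qy_1})\mu_\pm$ is strictly positive. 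Substituting $\Phi_+(q, -z) = -\phi_{q_+}(z)$ and $\Phi_-(q, z) = -\phi_{q_-}(z)$ into \eqref{eq:wh} produces \eqref{eq:wh1} at once.

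Finally, for \eqref{eq:cp}, under ${\bf T_{\beta^+}}$ the Laplace exponent $\Psi_q$ extends analytically from $i\R$ into the strip $\{\Re(z) < \beta_+\}$; since $\phi_{q_-}$ is already finite and analytic on $\{\Re(z) \geq 0\}$, the factor $\phi_{q_+} = -\Psi_q/\phi_{q_-}$ inherits an analytic extension to the same strip wherever $\phi_{q_-}$ does not vanish. For $\beta \in (0, \beta_q^*)$ and $s \in (-\beta, \beta_+ - \beta)$, so that $s + \beta \in (0, \beta_+)$, the factorization \eqref{eq:wh1} at $s + \beta$ combined with the definition of $\mathcal{T}_\beta$ gives
\[\mathcal{T}_\beta \Psi_q(s) = \frac{s}{s+\beta}\Psi_q(s+\beta) = -\phi_{q_+}(s+\beta)\cdot\frac{s}{s+\beta}\phi_{q_-}(s+\beta) = -\phi_{q_+}(s+\beta)\,\mathcal{T}_\beta \phi_{q_-}(s),\]
which is precisely \eqref{eq:cp}. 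The only mildly delicate step is the verification of the Lévy measure integrability for $\mu_{q_\pm}$; beyond that there is essentially no substantive obstacle, the rest being algebraic bookkeeping on the two independent factors of the Wiener-Hopf decomposition.
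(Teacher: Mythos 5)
Your proof is correct and follows essentially the same route as the paper: the same splitting $1-e^{-(qy_1+sy_2)}=(1-e^{-qy_1})+e^{-qy_1}(1-e^{-sy_2})$ to isolate the killing constants $q_{\pm}$ and the measures $\mu_{q_\pm}$, substitution into \eqref{eq:wh} for the factorization, and analytic continuation of that factorization to the strip followed by the algebraic rearrangement $\frac{z}{z+\beta}\Psi_q(z+\beta)=-\phi_{q_+}(z+\beta)\frac{z}{z+\beta}\phi_{q_-}(z+\beta)$ for \eqref{eq:cp}. The only (minor) divergences are that the paper justifies the finiteness of $\phi_{q_+}$ on $(-\infty,\beta^+)$ by citing Lemma 4.2 of Pardo--Patie--Savov rather than via the quotient $-\Psi_q/\phi_{q_-}$ (your route gives the analytic extension but one should still check it agrees with the L\'evy--Khintchine representation of $\phi_{q_+}$ at $z+\beta$), and that the paper closes the argument by verifying that $-\phi_{q_+}(\cdot+\beta)$ and $\mathcal{T}_{\beta}\phi_{q_-}$ are legitimate Wiener--Hopf factors and invoking uniqueness of the factorization, thereby obtaining the slightly stronger fact, used later in the paper, that these are the actual ladder exponents of $\mathcal{T}_{\beta}\Psi_q$.
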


\begin{proof}
Since, for any $q>0$, we have that
 \begin{eqnarray*}
 \int_{0}^{\infty}\int_{0}^{\infty}\left(1-e^{-(q y_{1}+z y_{2})}\right)\mu_{\pm}(dy_{1},dy_{2})&=&   \int_{0}^{\infty}\left(1-e^{-z y_{2}}\right) \mu_{q_\pm}(dy_2) \\ &+& \int_{0}^{\infty}\int_{0}^{\infty}\left(1-e^{-q y_{1}}\right)\mu_{\pm}(dy_{1},dy_{2}),
 \end{eqnarray*}
we deduce the first claim from the fact that $q_{\pm}>0$, for any $q>0$.

\noindent Next, we have, under the ${\bf{T_{\beta^+}}}$ condition that $s\mapsto \phi_{q_+}(s)$ is well-defined on $(-\infty,\beta^+)$, see \cite[Lemma 4.2]{Pardo-Patie-Savov}. Also, for any $\beta \in (0,\beta_q^{*})$,  $\phi_{q_+}(\beta)<0$ as clearly both $\Psi_q(\beta)<0$ and $\phi_{q-}(\beta)<0$. Thus, for such $\beta$ the mapping $s\mapsto \phi_{q_+}(s+\beta)$ is the Laplace exponent of a killed subordinator. Moreover, it is not difficult to check that, for any fixed $q\geq0$ and $\beta \in (0,\beta_q^{*})$, $z\mapsto \mathcal{T}_{\beta}\phi_{q_-}(z) $ is  the Laplace exponent of the negative of a proper subordinator. Moreover, since $\beta \in (0,\beta_q^{*})$ we deduce, from the item (2) of Proposition \ref{prop:mapping}, that the proper L\'evy process with characteristic exponent $\mathcal{T}_{\beta}\Psi_q$  drifts to $-\infty$ and hence its  descending ladder height process  is also the negative of a proper subordinator, see e.g.~\cite{Doney}. We conclude by observing the identities
\begin{equation*}
\mathcal{T}_{\beta}\Psi_q (z+\beta)=\frac{z}{z+\beta}\Psi_q (z+\beta) = -\phi_{q_+}(z+\beta) \frac{z}{z+\beta}\phi_{q_-}(z+\beta)=-\phi_{q_+}(z+\beta)\mathcal{T}_{\beta}\phi_{q_-}(z)
\end{equation*}
and by invoking the uniqueness for the Wiener-Hopf factors, see \cite[Theorem 45.2 (i)]{Sato-99}.
\end{proof}

The $\mathcal{T}_{\beta}$ transform turns out to be also very useful in  proving the following claim which shows, in particular, that the family of exponential functional of L\'evy processes is invariant under some  length-biased transforms. In particular,  the law of  ${\rm{I}}_{\Psi_{q}}$   admits such a  representation in terms of a perpetual exponential functional. Although, a similar result was given  in \cite{Chazal-al-10} for one-sided L\'evy processes, its extension requires deeper arguments.
\begin{theorem}\label{Th1}
Let us assume that $\xi$ drifts to $-\infty$ when $q=0.$ Then the following claims hold:
\begin{enumerate}
\item The law of the random variable ${\rm{I}}_{\Psi_{q}}$ is absolutely continuous.
    \item Assume further that ${\bf{T_{\beta^+}}}$ holds. Then,  for every $\beta \in (0,\beta_q^{*})$, there exists a proper L\'evy process  with a finite negative mean and Laplace exponent $\mathcal{T}_{\beta}\Psi_{q}$, such that
\begin{equation}\label{Th1-1}
m_{\Psi_{q}}(x)=\E\left[{\rm{I}}^{\beta}_{\Psi_{q}}\right]x^{-\beta}m_{\mathcal{T}_{\beta}\Psi_{q}}(x)\: \text{a.e., for $x>0$},
\end{equation}
where $m_{\mathcal{T}_{\beta}\Psi_{q}}$ is the density of ${\rm{I}}_{\mathcal{T}_{\beta}\Psi_{q}}$.
\item Finally,  for any $q\geq0$, we have
\begin{equation*}
\lim_{\beta \to 0} {\rm{I}}_{\Tb\Psi_q}\stackrel{d}={\rm{I}}_{\Psi_q}.
\end{equation*}
\end{enumerate}
\end{theorem}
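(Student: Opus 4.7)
The plan is to use the Mellin transform $M_{\Psi_q}(s):=\E[I_{\Psi_q}^s]$ as the main tool, in combination with the Carmona--Petit--Yor type functional equation
\begin{equation*}
s\,M_{\Psi_q}(s)\;=\;-\Psi_q(s)\,M_{\Psi_q}(s-1),
\end{equation*}
valid on an appropriate vertical strip. For item~(1), I would condition on the independent killing clock $\e_q$: for each deterministic $t>0$, the law of $\int_0^t e^{\xi_s}\d s$ is absolutely continuous whenever $\xi$ is not purely deterministic, a classical fact, and integrating against the absolutely continuous law of $\e_q$ preserves absolute continuity. The borderline case $\xi_t = bt$ for a constant $b$ is handled directly, since then $I_{\Psi_q}$ is an explicit smooth transform of $\e_q$.

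For item~(2), Proposition~\ref{prop:mapping} guarantees that under ${\bf T}_{\beta^+}$, for every $\beta\in(0,\beta_q^{*})$, $\mathcal{T}_{\beta}\Psi_q$ is the Laplace exponent of an unkilled proper L\'evy process with finite negative mean. The Carmona--Petit--Yor recursion therefore applies to $I_{\mathcal{T}_{\beta}\Psi_q}$ and its Mellin transform obeys the same type of functional equation. Plugging in $\mathcal{T}_{\beta}\Psi_q(s)=\tfrac{s}{s+\beta}\Psi_q(s+\beta)$, a direct substitution shows that both
\[
s\longmapsto M_{\mathcal{T}_{\beta}\Psi_q}(s)\qquad\text{and}\qquad s\longmapsto\frac{M_{\Psi_q}(s+\beta)}{M_{\Psi_q}(\beta)}
\]
solve the \emph{same} recursion with common value $1$ at $s=0$. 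Evaluating on the critical line $s=it$ and inverting the Mellin transform using the density from item~(1) then delivers the pointwise identity \eqref{Th1-1}.

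Item~(3) would then be read off directly from item~(2): since $\beta\mapsto \E[I_{\Psi_q}^{\beta}]$ is continuous at $0$ with value $1$, the pointwise limit
\[
m_{\mathcal{T}_{\beta}\Psi_q}(x)\;=\;\frac{x^{\beta}m_{\Psi_q}(x)}{\E[I_{\Psi_q}^{\beta}]}\;\longrightarrow\; m_{\Psi_q}(x)\qquad\text{as }\beta\downarrow 0,
\]
combined with Scheff\'e's lemma, upgrades to total-variation convergence of the densities, hence to convergence in distribution.

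The main obstacle is the identification step in~(2). Two points need genuine care and are precisely what makes the two-sided extension of \cite{Chazal-al-10} non-trivial. First, one has to verify finiteness of $M_{\Psi_q}(\beta)$ on the strip $\beta\in(0,\beta_q^{*})$; in the one-sided setting this is essentially automatic from monotonicity of the sample paths, whereas here I would establish it using ${\bf T}_{\beta^+}$ together with an optional-stopping argument at the killing time. Second, uniqueness of the solution to the functional equation is delicate: I would address it by exploiting the analyticity of $M_{\Psi_q}$ in the strip granted by~${\bf T}_{\beta^+}$ combined with a Carlson-type growth bound inherited from Mellin inversion of the density constructed in item~(1).
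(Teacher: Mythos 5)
Your overall architecture for items (2) and (3) — tilt the density by $x^{\beta}$, match the Carmona--Petit--Yor functional equations of the two Mellin transforms, then let $\beta\downarrow 0$ with Scheff\'e — is the same as the paper's, and item (3) is fine. But there are two genuine gaps.

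First, in item (1) the ``classical fact'' you invoke is false. If $\xi$ is a compound Poisson process plus a drift $b$ (no Gaussian component, finite L\'evy measure), then for every fixed $t>0$ one has $\P\left(\int_0^t e^{\xi_s}ds=\int_0^t e^{bs}ds\right)\geq e^{-\lambda t}>0$, so the fixed-time law has an atom; worse, conditionally on the jump times and jump heights the functional is deterministic, so there is no absolutely continuous part to integrate. Simply mixing over $\e_q$ does not repair this, and this case is exactly where the paper spends most of its effort: it conditions on the full sequences of inter-arrival times and jump sizes, decomposes over the events $\{S_j\leq \e_q<S_{j+1}\}$, and shows the residual functional $\int_0^{\e_q-S_k}e^{B'_s}ds$ is absolutely continuous using either Yor's result when the Gaussian part is nondegenerate or the absolute continuity of the law of $\e_q$ itself when $B'_s=as$. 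In other words, the source of the density is the killing clock (or the Brownian part), not the fixed-horizon functional, and one must condition down to that level to extract it.

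Second, in item (2) you correctly identify uniqueness of the solution of the functional equation as the main obstacle, but the plan you sketch does not close it. Two solutions of the recursion differ by a $1$-periodic analytic factor, and to force that factor to be constant you need not only an upper bound on $M_{\mathcal{T}_{\beta}\Psi_q}$ along vertical lines but a \emph{lower} bound on $|M_{\Psi_q}(s+\beta)|$; the only input available from item (1) is that the Mellin transform of an $L^1$ density is bounded by $1$ on the critical line, and such transforms typically decay along vertical lines, so the periodic ratio may well be unbounded and a Carlson/Liouville argument does not apply as stated. The paper avoids this entirely: using Rivero's lemma it checks $\Ec{{\rm{I}}_{\beta,q}^{-1}}<\infty$ and $\Ec{{\rm{I}}_{\beta,q}^{\delta}}<\infty$ for some $\delta>0$, deduces via \cite[Lemma 4.4]{Pardo-Patie-Savov} that the tilted density is a stationary density of the generalized Ornstein--Uhlenbeck process attached to $\mathcal{T}_{\beta}\Psi_q$, and concludes by uniqueness of the stationary measure (\cite[Theorem 3.7]{Pardo-Patie-Savov}). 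Unless you can actually supply the two-sided growth estimates, you should replace your uniqueness step by this stationary-measure identification.
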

\begin{proof}
First, we point out that the absolute continuity of ${\rm{I}}_{\Psi_q}$ in the case $q=0$ is well-known and can be found in \cite[Theorem 3.9]{Bertoin-Lindner-07}. Thus, we assume that $q>0$.  The case when $\xi$ is with infinitely many jumps can be recovered from \cite[Theorem 3.9 (b)]{Bertoin-Lindner-07}. Indeed, for any $v>0$, denote by $g(s)=e^{s}$ and $dY^{(v)}_{t}=\mathbb{I}_{\{t<v\}}dt$. Since $g(s)$ is strictly increasing we have that condition (3.12) in \cite{Bertoin-Lindner-07} is satisfied. Moreover, for $\epsilon<v$, we have that the density of the absolutely continuous part of the measure $dY^{(v)}_{t}$ restricted to $[0,\epsilon]$ has a density which equals $1$. According to  \cite[Theorem 3.9 (b)]{Bertoin-Lindner-07} this suffices to show that
$\int_{0}^{v}e^{\xi_{s}}ds$ has a law which is absolutely continuous with respect to the Lebesgue measure. Then for any Borel set $A\subset (0,\infty)$ we have that
\[\P\left({\rm{I}}_{\Psi_{q}}\in\,A\right)=q\int_{0}^{\infty}e^{-qt}\P\left(\int_{0}^{t}e^{\xi_{s}}ds\in\,A\right)dt\]
and our statement follows in this case. Next, assume that $\xi=\xi^{(1)}+B$ where $\xi^{(1)}$ is a compound Poisson process and $B$ a Brownian motion with given mean and variance, which can be both zero. We denote by $(T_{n})_{n\geq1}$ (resp.~$(Y_{n})_{n\geq1}$) the sequence of inter-arrival times (resp.~the sequence) of the jumps of $\xi^{(1)}$. Define the measures $\Upsilon$ and $\tilde{\Upsilon}$ respectively on $\R^{\N_{+}}_{+}=\{\omega=(t_{1},t_{2},...):\: t_{i}>0,\text{ for $i\geq 1$} \}$ and $\R^{\N_{+}}=\{\tilde{\omega}=(y_{1},...): y_{i}\in\R,\text{ for $i\geq 1$} \}$ to be induced by the sequences $(T_{n})_{n\geq1}$ and $(Y_{n})_{n\geq1}$. For any $\omega $ and $\tilde{\omega}$, we set $S_{0}(\omega) =\tilde{S}_{0}(\tilde{\omega})=0$ and we write  $S_{j}(\omega)=\sum_{i=1}^{j}t_{i}$, $\tilde{S}_{j}(\tilde{\omega})=\sum_{i=1}^{j}y_{i}$,
and \[P_{j}(\omega)=\P(S_{j}(\omega)\leq \e_{q}<S_{j+1}(\omega))=\P(A_{j}(\omega)).\] Denote by
\[\Gamma_{j,\omega}(dx)=\P\lb \e_{q}\in dx; A_{j}(\omega)|\omega\rb=P_{j}(\omega) \P\lb \e_q\in dx| A_{j}(\omega)\rb\]
and note that $\Gamma_{j,\omega}$ are absolutely continuous with respect to the Lebesgue measure. We also set
\[{\rm{I}}_{k}(\omega)=e^{\tilde{S}_{k-1}(\tilde{\omega})}\int_{S_{k-1}(\omega)}^{S_{k}(\omega)}e^{B_{s}}ds.\]
Now, we pick $A\subset \R_{+}$ such that the Lebesgue measure of $A$ is zero and write
\begin{eqnarray*}
\P\lb {\rm{I}}_{\Psi_{q}}\in A\rb&=& \int_{\R^{\N_{+}}_{+}\times\R^{\N_{+}}}\sum_{j=1}^{\infty}\P\lb \sum_{k=1}^{j}{\rm{I}}_{k}(\omega)+e^{\tilde{S}_{k}(\tilde{\omega})}\int_{S_{k}(\omega)}^{\e_q}e^{B_{s}}ds \in A;\,A_{j}(\omega) \big|\,\omega,\tilde{\omega} \rb\Upsilon(d\omega)\tilde{\Upsilon}(d\tilde{\omega})\\ &=&
\int_{\R^{\N_{+}}_{+}\times\R^{\N_{+}}}\sum_{j=1}^{\infty}\P\lb  \int_{S_{k}(\omega)}^{\e_q}e^{B_{s}}ds\in e^{-\tilde{S}_{k}(\tilde{\omega})}\lb A-\sum_{k=1}^{j}{\rm{I}}_{k}(\omega)\rb;\,A_{j}(\omega) \big|\,\omega,\tilde{\omega}  \rb\Upsilon(d\omega)\tilde{\Upsilon}(d\tilde{\omega}).
\end{eqnarray*}
Next, denote by $\mathbb{D}_{k}=\mathbb{D}_{S_{k}(\omega)}$ the full set of continuous functions images of the Brownian motion up to time $S_{k}(\omega)$ and note that
\begin{eqnarray*}
& &\P\lb  \int_{S_{k}(\omega)}^{\e_q}e^{B_{s}}ds\in e^{-\tilde{S}_{k}(\tilde{\omega})}\lb A-\sum_{k=1}^{j}{\rm{I}}_{k}(\omega)\rb;\,A_{j}(\omega) \big|\,\omega,\tilde{\omega}  \rb\\ &=&
\int_{f\in\mathbb{D}_{k}}\P\lb\int_{0}^{\e_q-S_{k}(\omega)}e^{B'_{s}}ds\in \tilde{A}_{j}(\omega);\,A_{j}(\omega)\big|\,\omega,\tilde{\omega},(B_{s})_{s\leq S_{k}(\omega)}=(f_{s})_{s\leq S_{k}(\omega)}\rb\Theta(df),
\end{eqnarray*}
where $B'$ is an independent copy of $B$, $\tilde{A}_{j}(\omega)=\left\{e^{-\tilde{S}_{k}(\tilde{\omega})-f_{S_{k}(\omega)}}\lb A-\sum_{k=1}^{j}{\rm{I}}_{k}(\omega)\rb
\right\}$ and $\Theta$ is the measure on $\mathbb{D}_{k}$ induced by $B$.
Furthermore since the sets $\tilde{A}_{j}(\omega)$ have zero Lebesgue measure it suffices to show that
$\int_{0}^{\e_q-S_{k}(\omega)}e^{B'_{s}}ds$
is absolutely continuous on $A_{j}(\omega)$. Indeed it follows because the law of $\int_{0}^{t}e^{B'_{s}}ds$ is absolutely continuous for any $t>0$ and non trivial Brownian motion, see \cite{Yor-01}.
When $B'_{s}=as$ the same claims follows as the measures $\Gamma_{j,\omega}$ are absolutely continuous and hence so is $\int_{0}^{\e_q-S_{k}(\omega)}e^{as}ds$. With this ends the proof of the absolute continuity of the law of ${\rm{I}}_{\Psi_{q}}$.

\noindent Next from \cite[Proposition 3.1.(i)]{Carmona-Petit-Yor-97} for $q>0$ and \cite[Lemma 2.1]{Maulik-Zwart-06} for the case $q=0$ the following equation
\begin{eqnarray} \label{eq:rec_sub1}
\E\left[{\rm{I}}_{\Psi_q}^z\right] &=& -\frac{z}{\Psi_q(z)}\:\E\left[{\rm{I}}_{\Psi_q}^{z-1}\right]
\end{eqnarray}
holds for any $z \in \mathbb{C}$ such that $0<\Re(z)<\beta_q^{*}$,  where we recall from Proposition \ref{prop:mapping}, that for any $q\geq0$, $\beta_q^{*}>0$ is valid. We point out that  all quantities involved are finite since for every $q\geq 0$ for which ${\rm{I}}_{\Psi_{q}}$ is well-defined, we have using \cite[Lemma 2]{Rivero} and a monotone argument, that
\begin{equation}\label{eq:fm}
\Ec{{\rm{I}}^{\rho}_{\Psi_{q}}}<\infty, \quad \textrm{ for all } \rho\in\lb-1,\beta_q^{*}\rb.
\end{equation}
 Thus,  for any $\beta \in (0,\beta_q^{*})$, we have,  for any $-\beta<\Re(z)<\beta_q^{*}-\beta$,
\begin{eqnarray}\label{Th1-2}
\E\left[{\rm{I}}_{\Psi_q}^{z+\beta}\right] &=& -\frac{z+\beta}{\Psi_q(z+\beta)}\:\E\left[{\rm{I}}_{\Psi_q}^{z+\beta-1}\right].
\end{eqnarray}
We note in particular from \eqref{eq:fm} that $\E\left[{\rm{I}}_{\Psi_q}^{\beta}\right]<\infty$.
On the other hand, we have from  Proposition \ref{prop:mapping}, that for any $q\geq0$ and  any $\beta \in (0,\beta_q^{*})$, $\Tb\Psi_q$ is the Laplace exponent of a L\'evy process with a finite negative mean and thus the random variable ${\rm{I}}_{\mathcal{T}_{\beta}\Psi_q}$ is well-defined. We deduce from  \eqref{eq:rec_sub1} and the definition of the transformation $\mathcal{T}_{\beta}$,  that, for any $-\beta<\Re(z)<\beta_q^{*}-\beta$,
\begin{eqnarray}\label{Th1-3}
\nonumber\E\left[{\rm{I}}_{\mathcal{T}_{\beta}\Psi_q}^z\right] &=& -\frac{z}{\mathcal{T}_{\beta}\Psi_q(z)}\:\E\left[{\rm{I}}_{\mathcal{T}_{\beta}\Psi_q}^{z-1}\right]
\\
&=&-\frac{z+\beta}{\Psi_q(z+\beta)}\:\E\left[{\rm{I}}_{\mathcal{T}_{\beta}\Psi_q}^{z-1}\right].
\end{eqnarray}
Next, since the distribution of ${\rm{I}}_{\Psi_q}$ is absolutely continuous, we have that the function $m_{\beta,q}$ given by
\begin{equation}\label{Th1-4}
m_{\beta,q}(x)=\left(\E[{\rm{I}}_{\Psi_{q}}^{\beta}]\right)^{-1}x^{\beta}m_{\Psi_{q}}(x), \text{ a.e. for $x>0$},
\end{equation}
is well-defined and determines a probability density function. We denote by ${\rm{I}}_{\beta,q}$ the random variable with density $m_{\beta,q}$. Then, clearly from \eqref{eq:fm} and \eqref{Th1-2}, the functional equation
\[\E\left[{\rm{I}}_{\beta,q}^{z}\right]=-\frac{z+\beta}{\Psi_q(z+\beta)}\E\left[{\rm{I}}_{\beta,q}^{z-1}\right]=-\frac{z}{\mathcal{T}_{\beta}\Psi_q(z)}\E\left[{\rm{I}}_{\beta,q}^{z-1}\right]\]
holds for $-\beta<\Re(z)<\beta_q^{*}-\beta$ and $\Ec{{\rm{I}}_{\beta,q}^{-1}}<\infty$ and $\Ec{{\rm{I}}_{\beta,q}^{\delta}}<\infty$, for some $\delta>0$. Thanks to the existence of these moments we can use  \cite[Lemma 4.4]{Pardo-Patie-Savov} to deduce that in the notation of \cite{Pardo-Patie-Savov},  $\mathcal{L}m_{\beta,q}(x)=0$ a.e. Moreover, as $\Ec{{\rm{I}}_{\beta,q}^{-1}}<\infty$ we can apply \cite[Theorem 3.7]{Pardo-Patie-Savov} to get in fact that \eqref{Th1-1} holds. Indeed, otherwise, both the law of ${\rm{I}}_{\mathcal{T_{\beta}}\Psi_{q}}$ and ${\rm{I}}_{\beta,q}$ will be different stationary measures of the generalized Ornstein-Uhlenbeck process associated to the \LLP with exponent $\mathcal{T_{\beta}}\Psi_{q}$ as defined  in \cite[Theorem 3.7]{Pardo-Patie-Savov} which is impossible. The proof of the last claim follows  readily from the limit $\lim_{\beta \to 0} \E\left[{\rm{I}}_{\Psi_q}^{\beta}\right]=1$ combined with identity \eqref{Th1-1}.
\end{proof}

The next result is in the spirit of the result of \cite[Theorem 1]{Pardo-Patie-Savov} in the case ${\bf{E_+}}$ and thus can be seen as its extension.
\begin{proposition} \label{lem:me}
Let us assume that ${\bf{T_{\beta^+}}}$ holds and $e^{\beta y}\mu_+^q(dy) \in \mathcal{P}$, for some $\beta \in (0,\beta^+)$. Then,
\begin{equation}\label{lem:me1}
{\rm{I}}_{\Psi_q}\stackrel{d}={\rm{I}}_{\phi_{q_-}}\times {\rm{I}}_{\psi^{q_+}},
\end{equation}
where $\psi^{q_+}(z)=z\phi_{q_+}(z)$.
\end{proposition}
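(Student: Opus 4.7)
The strategy is to apply the finite-mean factorization \cite[Theorem 1.2]{Pardo-Patie-Savov} to the proper L\'evy process obtained by conjugating $\Psi_q$ with the mapping $\Tb$, and then let $\beta \downarrow 0$ using the continuity statement of Theorem \ref{Th1}. Fix $\beta$ satisfying the hypotheses and, refining if necessary, assume $\beta\in (0,\beta_q^*)$. By Proposition \ref{prop:mapping}, $\Tb\Psi_q$ is the Laplace exponent of a proper L\'evy process with finite negative mean, and Proposition \ref{prop:twh} supplies its Wiener--Hopf factorization
\[\Tb\Psi_q(z) = -\phi_{q_+}(z+\beta)\,\Tb\phi_{q_-}(z).\]
Expanding $\phi_{q_+}(z+\beta)$ in L\'evy--Khintchine form, one reads off that the ascending ladder-height process of this proper L\'evy process is a killed subordinator whose L\'evy measure is $e^{\beta y}\mu_{q_+}(dy)$, which by hypothesis lies in $\mathcal{P}$.

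This is precisely the $\mathbf{E_+}$-type assumption required by \cite[Theorem 1.2]{Pardo-Patie-Savov} on the (proper, finite-mean) L\'evy process with exponent $\Tb\Psi_q$, so that factorization yields
\[{\rm{I}}_{\Tb\Psi_q}\stackrel{d}{=}{\rm{I}}_{\Tb\phi_{q_-}}\times{\rm{I}}_{\psi^{\beta,q_+}},\qquad\psi^{\beta,q_+}(z):=z\phi_{q_+}(z+\beta).\]
The pivotal algebraic identity $\psi^{\beta,q_+}(z)=\frac{z}{z+\beta}\psi^{q_+}(z+\beta)=\Tb\psi^{q_+}(z)$ shows that $\Tb$ intertwines compatibly with each of the three exponents appearing in the sought identity. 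Letting $\beta\downarrow 0$, I apply Theorem \ref{Th1}(3) to $\Psi_q$, to $\phi_{q_-}$ and to $\psi^{q_+}$ in turn. For $\Psi_q$ the hypothesis $\mathbf{T_{\beta^+}}$ is part of the standing assumption. For $\phi_{q_-}$, which is the Laplace exponent of the negative of a (possibly killed) subordinator, the positive L\'evy tail vanishes identically, so $\mathbf{T_{\beta^+}}$ is trivially satisfied. For $\psi^{q_+}$, which by \eqref{Phi} is the exponent of an unkilled spectrally positive L\'evy process with finite negative mean, I identify its positive L\'evy tail in terms of $\mu_{q_+}$ by integration by parts in \eqref{Phi}, and then deduce $\mathbf{T_{\beta^+}}$ from the hypothesis $e^{\beta y}\mu_{q_+}(dy)\in\mathcal{P}$. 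Consequently, each of the three exponential functionals converges in distribution to its $\beta=0$ limit, and since the two right-hand factors are independent for every $\beta$, the continuous mapping theorem delivers ${\rm{I}}_{\Psi_q}\stackrel{d}{=}{\rm{I}}_{\phi_{q_-}}\times{\rm{I}}_{\psi^{q_+}}$.

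The main technical obstacle I anticipate is the verification of $\mathbf{T_{\beta^+}}$ for the auxiliary spectrally positive exponent $\psi^{q_+}$: one must translate the $\mathcal{P}$-regularity of $e^{\beta y}\mu_{q_+}(dy)$ into the analogous regularity of the exponentially tilted positive L\'evy tail of $\psi^{q_+}$, which demands a careful integration-by-parts manipulation of the representation \eqref{Phi}. Once this is settled, the remaining steps are direct assemblies of Propositions \ref{prop:mapping}--\ref{prop:twh} and of Theorem \ref{Th1}.
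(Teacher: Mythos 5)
Your proposal is correct and follows essentially the same route as the paper: apply $\mathcal{T}_{\beta}$ to reduce to a proper finite-mean L\'evy process, use the factorization \eqref{eq:cp} of Proposition \ref{prop:twh} together with the $\mathbf{E_+}$ condition of \cite[Theorem 1]{Pardo-Patie-Savov} (verified via $e^{\beta y}\mu_{q_+}(dy)\in\mathcal{P}$), identify $z\phi_{q_+}(z+\beta)=\mathcal{T}_{\beta}\psi^{q_+}(z)$, and pass to the limit $\beta\downarrow 0$ via Theorem \ref{Th1}. The verification of $\mathbf{T_{\beta^+}}$ for each of the three exponents, which you single out as the main technical point, is exactly what the paper does as well.
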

\begin{proof}
 First, from  Proposition \ref{prop:mapping} we get that, for any $\beta \in (0,\beta_q^{*})$,  $\Tb\Psi_q$ is the Laplace exponent of a L\'evy process with a finite negative mean and thus  ${\rm{I}}_{\Tb\Psi_q}$ is well-defined. Next, $\bf{T}_{\beta^+}$ trivially holds for $\phi_{q_-}$ since it corresponds to the Laplace exponent of a negative of a subordinator.  Clearly $e^{\beta y}\mu_{q_{+}}(dy) \in \mathcal{P}$ implies $\mu_{q_{+}}(dy) \in \mathcal{P}$, thus $\psi^{q_+}$ is the Laplace exponent of an unkilled spectrally positive L\'evy process whose tail of the L\'evy measure has the form $\overline{\Pi}^{q_{+}}(y)dy=\mu_{q_+}(dy),y>0,$ see \cite[Lemma 4.3]{Pardo-Patie-Savov}. Therefore, as $e^{\beta y}\overline{\Pi}^{q_+}(y)dy=e^{\beta y}\mu_{q_{+}}(dy) \in \mathcal{P}$ and  $s\mapsto \phi_{q_+}(s)$  is well-defined on $(-\infty,\beta^+)$ which implies that $|\psi^{q_+}(s)|<+\infty$, for  any $s\in (-\infty,\beta^+)$,  we have that  $\psi^{q_+}$ satisfies the condition $\bf{T}_{\beta^+}$. Thus $\Tb\psi
^{q_+}(s)=\frac{s}{s+\beta}\psi^{q_+}(s+\beta)=s\phi_{q_+}(s+\beta)$ is the Laplace exponent of a proper spectrally positive L\'evy process with a finite negative mean $\phi_{q_+}(\beta)$. Next, since $e^{\beta y}\mu_{q_+}(dy) \in \mathcal{P}$, we have that the unkilled L\'evy process with Laplace exponent $\Tb\Psi_q$ satisfies the condition ${\bf{ E_+}}$ of \cite[Theorem 1]{Pardo-Patie-Savov}. From \eqref{eq:cp} of Proposition \ref{prop:twh} we deduce that
\[\frac{\Tb\Psi_q(z)}{z}=-\frac{z\phi_{q_+}(z+\beta)}{z}\frac{\Tb\phi_{q_-}(z)}{z}=-\frac{\Tb \psi^{q_+}(z)}{z}\frac{\Tb\phi_{q_-}(z)}{z}\]
  and from \cite[Theorem 1]{Pardo-Patie-Savov} that
\begin{equation}\label{eq:itb}
{\rm{I}}_{\Tb\Psi_q}\stackrel{d}={\rm{I}}_{\Tb\phi_{q_-}}\times {\rm{I}}_{\Tb \psi^{q_+}}.
\end{equation}
Then, from Theorem \ref{Th1}, we get that \[\lim_{\beta \to 0}{\rm{I}}_{\Tb\Psi_q}\stackrel{d}={\rm{I}}_{\Psi_q}, \:\lim_{\beta \to 0}{\rm{I}}_{\Tb\phi_{q_-}}\stackrel{d}={\rm{I}}_{\phi_{q_-}} \textrm{ and } \lim_{\beta \to 0}{\rm{I}}_{\Tb\psi^{q_+}}\stackrel{d}={\rm{I}}_{\psi^{q_+}},\] which completes the proof.
\end{proof}

Next we provide a killed version of the Vigon's equation amicale,  see \cite[Theorem 16]{Doney}.
\begin{proposition} \label{lem:ak}
Let  us assume that ${\bf {T_{\beta^+}}}$ holds.
\begin{enumerate}
\item Then, we have
\begin{equation}\label{Vigon1}
\overline{\mu}_{q_+}(y)=\int_0^{\infty}\overline{\Pi}_+(r+y)\mathcal{U}^{(q)}_-(dr), \: y>0,
\end{equation}
where $ \int_0^{\infty}e^{-sy}\mathcal{U}^{(q)}_-(dy) = \frac{1}{\phi_{q_-}(z)}$.
\item  Moreover $\phi_{q_+}$ satisfies the condition ${\bf{T_{\beta^+}}}$. Finally,  if for some  $\beta \in (0,\beta^+)$, $e^{\beta y}\Pi_{+}(dy)\in\mathcal{P}$ then $e^{\beta y}{\mu}_{q_+}(dy)\in \mathcal{P}$.
\end{enumerate}
\end{proposition}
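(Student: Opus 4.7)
My plan for part (1) is to apply Vigon's \emph{equation amicale} (see \cite[Theorem 16]{Doney}) directly to the killed L\'evy process. Since killing at an independent rate $q$ preserves the underlying L\'evy measure $\Pi$, the positive tail $\PPp$ appearing on the right-hand side is unchanged, while Proposition \ref{prop:twh} identifies the L\'evy measure of the ascending ladder height as $\mu_{q_+}$ and the Laplace exponent of the negative of the descending ladder height as $-\phi_{q_-}$, so that its renewal measure $\mathcal{U}^{(q)}_-$ has Laplace transform $-1/\phi_{q_-}$. Plugging these into Vigon's identity yields \eqref{Vigon1} at once. If one prefers to avoid invoking a killed version, an equivalent route is to apply the classical unkilled statement to the proper L\'evy process with exponent $\Tb\Psi_q$ for any $\beta\in(0,\beta_q^{*})$: by \eqref{eq:ct_t} its positive L\'evy tail equals $e^{\beta y}\PPp(y)$ (a short integration by parts), and by \eqref{eq:cp} its ascending and descending ladder exponents are $\phi_{q_+}(\cdot+\beta)$ and $\Tb\phi_{q_-}$; letting $\beta\downarrow 0$ via Theorem \ref{Th1} then removes the exponential tilt and recovers \eqref{Vigon1}.

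For the first assertion of part (2), I would multiply \eqref{Vigon1} by $e^{\beta y}$ to obtain
\[e^{\beta y}\overline{\mu}_{q_+}(y)\,=\,\int_0^{\infty}\bigl(e^{\beta(r+y)}\PPp(r+y)\bigr)\,e^{-\beta r}\,\mathcal{U}^{(q)}_-(dr).\]
Condition ${\bf{T_{\beta^+}}}$ makes the bracketed factor non-increasing in $y$ for each fixed $r$, so $y\mapsto e^{\beta y}\overline{\mu}_{q_+}(y)$ is non-increasing. For finiteness of $\phi_{q_+}(\beta)$, the identity $\int_0^{\infty}(e^{\beta y}-1)\mu_{q_+}(dy)=\beta\int_0^{\infty}e^{\beta u}\overline{\mu}_{q_+}(u)\,du$ reduces the task to estimating the right-hand integral, and Fubini applied to \eqref{Vigon1} gives
\[\int_0^{\infty}e^{\beta u}\overline{\mu}_{q_+}(u)\,du\,\le\,\left(\int_0^{\infty}e^{-\beta r}\,\mathcal{U}^{(q)}_-(dr)\right)\left(\int_0^{\infty}e^{\beta v}\PPp(v)\,dv\right);\]
both factors are finite, the first because $\phi_{q_-}(\beta)<0$ for $\beta>0$ and the second because $|\Psi(\beta)|<\infty$.

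For the last claim, the assumption $e^{\beta y}\Pi_+(dy)\in\Ph$ supplies a density $\pi_+$ of $\Pi_+$ with $y\mapsto e^{\beta y}\pi_+(y)$ non-increasing. Differentiating \eqref{Vigon1} in $y$ (the exchange with the integral being justified by dominated convergence, the dominating function coming from the estimate above) gives the density of $\mu_{q_+}$ as $m_+(y)=\int_0^{\infty}\pi_+(r+y)\,\mathcal{U}^{(q)}_-(dr)$, whence
\[e^{\beta y}m_+(y)\,=\,\int_0^{\infty}e^{-\beta r}\bigl(e^{\beta(r+y)}\pi_+(r+y)\bigr)\,\mathcal{U}^{(q)}_-(dr),\]
a non-negative $\mathcal{U}^{(q)}_-$-average of functions non-increasing in $y$, and therefore non-increasing itself. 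The main obstacle I anticipate is the opening step: one has either to verify explicitly that Vigon's equation carries over to the killed setting (a standard cemetery-state extension) or to arrange enough uniform control in the $\beta\downarrow 0$ route to pass from the tilted identity back to \eqref{Vigon1} pointwise rather than merely in some integrated sense.
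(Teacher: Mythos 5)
Your second route is the paper's actual route (your first, ``direct'' route is circular: Vigon's \emph{\'equation amicale} in \cite[Theorem 16]{Doney} is stated for proper L\'evy processes, and its validity for killed processes is precisely the content of item (1), so it cannot be invoked as known). The proof of item (2) -- multiplying \eqref{Vigon1} by $e^{\beta y}$ and averaging non-increasing functions against $\mathcal{U}^{(q)}_-$, and likewise for the differentiated version -- matches the paper's argument. Your observation that the positive L\'evy tail of $\mathcal{T}_{\beta}\Psi_q$ is exactly $e^{\beta y}\overline{\Pi}_+(y)$ is correct and is even a slight simplification of the paper, which keeps the form $\int_y^{\infty}e^{\beta r}(\Pi(dr)-\beta\overline{\Pi}_+(r)dr)$ and only shows convergence at continuity points.

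The genuine gap is in the $\beta\downarrow 0$ step of item (1), which you flag as a risk but whose actual difficulty you misidentify. You say letting $\beta\downarrow 0$ ``removes the exponential tilt,'' which suggests you expect the descending renewal measure of the tilted process to be $e^{-\beta r}\mathcal{U}^{(q)}_-(dr)$. It is not: from $\frac{1}{\mathcal{T}_{\beta}\phi_{q_-}(z)}=\frac{1}{\phi_{q_-}(z+\beta)}+\frac{\beta}{z\,\phi_{q_-}(z+\beta)}$ one gets
\begin{equation*}
\mathcal{U}^{(\beta,q)}_-(dy) \;=\; e^{-\beta y}\mathcal{U}^{(q)}_-(dy)\;+\;\beta\left(\int_0^y e^{-\beta r}\mathcal{U}^{(q)}_-(dr)\right)dy,
\end{equation*}
and the second, absolutely continuous term contributes $\beta\int_0^{\infty}\overline{\Pi}^{\beta}_+(y+r)\int_0^r e^{-\beta v}\mathcal{U}^{(q)}_-(dv)\,dr$ to the tilted \emph{\'equation amicale}. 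Showing this vanishes as $\beta\to 0$ is the technical heart of the proof: the paper bounds it by $\beta\,\overline{\mathcal{U}}^{(q)}\int_y^{\infty}re^{\beta r}\Pi_+(dr)\le \beta\, C_q$ uniformly for small $\beta$, using ${\bf{T_{\beta^+}}}$ and the finiteness of $\mathcal{U}^{(q)}_-$ for $q>0$. Without this decomposition and uniform estimate your limiting argument does not close; note also that Theorem \ref{Th1}, which you cite for the passage to the limit, concerns convergence of exponential functionals and is not the relevant tool here. A smaller point: your Fubini bound $\int_0^{\infty}e^{\beta u}\overline{\mu}_{q_+}(u)du\le\bigl(\int_0^{\infty}e^{-\beta r}\mathcal{U}^{(q)}_-(dr)\bigr)\bigl(\int_0^{\infty}e^{\beta v}\overline{\Pi}_+(v)dv\bigr)$ can have an infinite right-hand side, since $\int_0^1\overline{\Pi}_+(v)dv$ need not be finite for a general L\'evy measure; you should split the integral at $u=1$ (the part near $0$ is finite because $\mu_{q_+}$ is a subordinator's L\'evy measure), or simply cite \cite[Lemma 4.2]{Pardo-Patie-Savov} as the paper does.
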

\begin{proof} We consider only the case when $q>0$ since when $q=0$ we are in the setting of the classical Vigon's equation amicale. Next, the latter applied to the unkilled L\'evy process $\xi^{(\beta,q)}$ as defined in Proposition \ref{prop:mapping} with $\beta \in (0,\beta_q^{*})$,   yields, with the obvious notation,
\begin{equation}\label{eq:ek}
 \overline{\mu}^{\beta}_{q_+}(y)=\int_0^{\infty}\overline{\Pi}^{\beta}_+(y+r)\mathcal{U}^{(\beta,q)}_-(dr),\end{equation}
where, from \eqref{eq:ct_t}, we have \begin{equation} \label{eq:ttk}
\overline{\Pi}^{\beta}_+(y)=\int_y^{\infty}\Pi^{\beta}(dr)\mathbb{I}_{\{y>0\}}= \int_y^{\infty}e^{\beta
r}\left(\Pi(dr)-\beta\overline{\Pi}_{+}(r)dr\right)\mathbb{I}_{\{y>0\}}\end{equation}
and from Proposition \ref{prop:twh} and \cite[p.~74]{Bertoin-96}
 \[\int_0^{\infty}e^{-zy}\mathcal{U}^{(\beta,q)}_-(dy) = \frac{1}{\mathcal{T}_{\beta}\phi_{q_-}(z)}= \frac{1}{\frac{z}{z+\beta}\phi_{q_-}(z+\beta)}=\frac{1}{\phi_{q_-}(z+\beta)}+\beta\frac{1}{z\phi_{q_-}(z+\beta)}.\]
From the latter we immediately deduce by comparing the Laplace transforms that
\begin{equation}\label{Mladen2}
\mathcal{U}^{(\beta,q)}_-(dy) = e^{-\beta y}\mathcal{U}^{(q)}_-(dy)+\beta\int_0^ye^{-\beta r}\mathcal{U}^{(q)}_-(dr)dy.
\end{equation}
Plugging \eqref{Mladen2} into \eqref{eq:ek}, we get, for all $y>0$,
\begin{align*}
& \overline{\mu}^{\beta}_{q_+}(y)=\int_{0}^{\infty}\overline{\Pi}^{\beta}_+(y+r)e^{-\beta r}\mathcal{U}^{(q)}_-(dr)+\beta \int_{0}^{\infty}\overline{\Pi}^{\beta}_+(y+r)\int_0^re^{-\beta v}\mathcal{U}^{(q)}_-(dv)dr.
\end{align*}
Next, we have, using identity \eqref{eq:ct_t} and the fact that condition ${\bf{T_{\beta_{+}}}}$ holds, the
  existence of a constant $C>0$ such that, for all $\beta$ small enough,
 \[ \int_{y}^{\infty}{\overline{\Pi}}^{\beta}_+(r)dr\leq \int_{y}^{\infty}\int_{r}^{\infty}e^{\beta v}\Pi_+(dv)dr=\int_{y}^{\infty}re^{\beta r}\Pi_+(dr)\leq C.\]
Using this inequality and recalling that, for any $q>0$, $\mathcal{U}^{(q)}_-$ is a positive finite measure on $\R^+$, as a potential measure of a negative of a killed subordinator, that  is a transient Markov process, we obtain, with $\overline{\mathcal{U}}^{(q)}=\mathcal{U}^{(q)}_-(0,\infty)$, that
\begin{eqnarray*}
\int_{0}^{\infty}\overline{\Pi}^{\beta}_+(y+r)\int_0^re^{-\beta v}\mathcal{U}^{(q)}_-(dv)dr 
&\leq&  \overline{\mathcal{U}}^{(q)}\int_{y}^{\infty}{\overline{\Pi}}^{\beta}_+(r)dr\: \leq  \overline{\mathcal{U}}^{(q)}\int_{y}^{\infty}re^{\beta r}\Pi_+(dr)  \leq C_q,
\end{eqnarray*}
where the constant $C_q>0$ is also uniform for all $\beta$ small enough. This gives us
\begin{equation} \label{eq:ip}
\lim_{\beta\to 0}\overline{\mu}^{\beta}_{q_+}(y)=\lim_{\beta\to 0}\int_{0}^{\infty}\overline{\Pi}^{\beta}_+(y+r)e^{-\beta r}\mathcal{U}^{(q)}_-(dr).\end{equation}
Since for all $\beta$ small enough $ \int_{y}^{\infty}e^{\beta r}\overline{\Pi}_+(r)dr\leq C_{1}$, with $C_{1}>0$,   we have, from \eqref{eq:ttk}, at the points of continuity of $\Pi_{+}(dy)$,   that
\begin{align*}
&\lim_{\beta\to 0}\overline{\Pi}^{\beta}_+(y)=\lim_{\beta\to 0}\lb \int_{y}^{\infty}e^{\beta r}\Pi_{+}(dr)-\beta\int_{y}^{\infty}e^{\beta r}\overline{\Pi}_+(r)dr\rb=\overline{\Pi}_+(y).
\end{align*}
Since $\mathcal{U}_-^{(q)}$ defines a finite measure, we conclude from \eqref{eq:ip} that $\lim_{\beta\to 0}\overline{\mu}^{\beta}_{q_+}(y)=\overline{\mu}^{q}_+(y)$ and hence \eqref{Vigon1} holds.
Next, the fact that the mapping $s\mapsto \phi_{q_+}(s)$ is well defined on $(0,\beta_{+})$ follows readily from \cite[Lemma 4.2]{Pardo-Patie-Savov} since $\Psi_q$ satisfies the condition ${\bf{T_{\beta^+}}}$. Then, for any $0<\beta<\beta^+$, \eqref{Vigon1} gives us that
\[e^{\beta y}\overline{\mu}_{q_+}(y)=\int_0^{\infty}e^{\beta (y+r)}\overline{\Pi}_+(y+r)e^{-\beta r}\mathcal{U}^{(q)}_-(dr).\]
The claim that $e^{\beta y}\overline{\mu}_{q_+}(y)\in \mathcal{P}$ now follows from the fact that  for every fixed $r>0$, the mapping $y\mapsto e^{\beta (y+r)}\overline{\Pi}_+(y+r)$ is non-increasing on $\R^+$.  Hence   $\phi_{q_+}$ also satisfies ${\bf T_{\beta_{+}}}$. Assume now that $e^{\beta y}\Pi_{+}(dy)\in\mathcal{P}$, then one may write $\Pi_+(dy)=\pi_{+}(y)dy$ and the equation
\[e^{\beta  y}\mu_{q_+}(dy)=\int_0^{\infty}e^{\beta(y+r)}\pi_+(y+r)e^{-\beta r}\mathcal{U}^{(q)}_-(dr)dy,\]
which is a differentiated  version of \eqref{Vigon1} shows that $e^{\beta y}{\mu}_{q_+}(dy)\in \mathcal{P}$. To rigorously justify the exchange of differentiation and integration in the differentiated version above note that under ${\bf{T_{\beta^+}}}$ the differentiated version is clearly valid if $q>0$ since  $\mathcal{U}^{(q)}_-$ defines a finite measure. Moreover, when $q=0$ and $\beta>0$, $e^{-\beta r}\mathcal{U}_-(dr)$ is a finite measure due to the sublinearity of the potential function $\mathcal{U}_-((0,r))$, see \cite[p 74]{Bertoin-96}. Finally when both $q=0$ and $\beta=0$ the differentiated version follows from \cite[Lemma 4.11]{Pardo-Patie-Savov}.  
\end{proof}
In order to complete the proof of Theorem \ref{MainTheorem} in the case ${\bf P}+$ we will resort to some approximation procedures for which we need the following results.
\begin{lemma}\label{lem:ce}
\begin{enumerate}[(a)]
\item Let $(\phi_-^{(n)})_{n\geq1}$ be a sequence of Laplace exponents of negative of possibly killed subordinators. Assume that for all $s\geq0$, $\lim_{n\to \infty}\phi_-^{(n)}(s)=\phi_-(s)$, where $\phi_-$ is the Laplace exponent of a negative of a possibly killed subordinator. Then
\begin{equation*}
\lim_{n\to\infty}{\rm{I}}_{\phi_-^{(n)}}\stackrel{d}={\rm{I}}_{\phi_-}.
\end{equation*}
\item Let $(\Psi^{(n)})_{n\geq1}$ be a sequence of characteristic exponents of \LLPs  such that, for all $z\in i\R$,
\begin{equation} \label{eq:cpl}
 \lim_{n\to\infty}\Psi^{(n)}(z)= \Psi(z),
 \end{equation}
where $\Psi$ is the characteristic exponent of a L\'evy process.  Assume further that for all $n\geq1$, $\Psi^{(n)}(0)= \Psi(0)=0$. Then, for all $q>0$,
\begin{equation*}
\lim_{n\to\infty}{\rm{I}}_{\Psi_q^{(n)}}\stackrel{d}={\rm{I}}_{\Psi_q}.
\end{equation*}
  \end{enumerate}
\end{lemma}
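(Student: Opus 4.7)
My plan is to combine the classical fact that pointwise convergence of L\'evy exponents forces Skorokhod convergence of the associated L\'evy processes with a continuous mapping argument and a uniform tail-in-time bound. In both parts I will use that pointwise convergence of $\phi_-^{(n)}(s)\to\phi_-(s)$ on $\R_+$ (respectively of $\Psi^{(n)}(z)\to\Psi(z)$ on $i\R$), combined with $\Psi^{(n)}(0)=\Psi(0)=0$ in case (b), implies $\xi^{(n)}\Rightarrow\xi$ in the Skorokhod $J_1$-topology on $D([0,\infty))$, a classical statement appearing for instance in Jacod--Shiryaev.

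For part (a), I would first observe that the well-definedness of ${\rm{I}}_{\phi_-}$ as an a.s.\ finite random variable forces $\phi_-(1)<0$, since the degenerate case $\phi_-\equiv 0$ gives ${\rm{I}}_{\phi_-}=\infty$ and is hence excluded. For any $M>0$, I split ${\rm{I}}_{\phi_-^{(n)}}=A_M^{(n)}+R_M^{(n)}$ with $A_M^{(n)}=\int_0^{M\wedge\e^{(n)}}e^{\xi^{(n)}_t}dt$ and $R_M^{(n)}=\int_{M\wedge\e^{(n)}}^{\e^{(n)}}e^{\xi^{(n)}_t}dt$, where $\e^{(n)}$ denotes the lifetime of $\xi^{(n)}$. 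A Fubini computation using the elementary identity $\E[e^{\xi^{(n)}_t}\Id{t<\e^{(n)}}]=e^{t\phi_-^{(n)}(1)}$ yields
\[
\E\left[R_M^{(n)}\right]=\int_M^\infty e^{t\phi_-^{(n)}(1)}dt=\frac{e^{M\phi_-^{(n)}(1)}}{-\phi_-^{(n)}(1)},
\]
which, thanks to $\phi_-^{(n)}(1)\to\phi_-(1)<0$, is uniformly small in $n$ for $M$ large; thus the remainders vanish in probability uniformly in $n$ as $M\to\infty$. For $A_M^{(n)}$, the joint Skorokhod convergence $(\xi^{(n)},\e^{(n)})\Rightarrow(\xi,\e)$ together with the a.s.\ continuity of the L\'evy path at the deterministic time $M$ permits an application of the continuous mapping theorem to the integration functional, giving $A_M^{(n)}\Rightarrow A_M$. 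A standard Slutsky-type argument then yields ${\rm{I}}_{\phi_-^{(n)}}\Rightarrow{\rm{I}}_{\phi_-}$.

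Part (b) is more direct, since the independent killing time $\e_q$ has a fixed exponential distribution. Writing ${\rm{I}}_{\Psi_q^{(n)}}=\int_0^{\e_q}e^{\xi^{(n)}_s}ds$ and conditioning on $\e_q=t$ gives, for every bounded continuous $f:\R_+\to\R$,
\[
\E\left[f({\rm{I}}_{\Psi_q^{(n)}})\right]=\int_0^\infty q e^{-qt}\,\E\left[f\left(\int_0^t e^{\xi^{(n)}_s}ds\right)\right]dt.
\]
For each fixed $t>0$, Skorokhod convergence $\xi^{(n)}\Rightarrow\xi$ plus continuity of the map $\omega\mapsto\int_0^t e^{\omega_s}ds$ at paths that are continuous at $t$ (a property which holds a.s.\ for any L\'evy process at a deterministic time) gives $\int_0^t e^{\xi^{(n)}_s}ds\Rightarrow\int_0^t e^{\xi_s}ds$ and hence pointwise convergence of the inner expectation in $t$. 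Dominated convergence against the integrable envelope $\|f\|_\infty q e^{-qt}$ concludes.

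The principal technical obstacle is part (a): the possibly vanishing or merely pointwise-convergent killing rate forces one to control the tail in time by hand via the elementary Laplace-transform bound above, and to isolate the mild nondegeneracy condition $\phi_-(1)<0$. Part (b), by contrast, is essentially free once the Skorokhod convergence is in place, since the deterministic exponential killing at rate $q>0$ supplies its own integrable dominant.
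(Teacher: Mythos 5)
Your proof is correct, but for part (a) it takes a genuinely different route from the paper's. The paper proves (a) purely analytically: by the Carmona--Petit--Yor formula the positive integer moments of ${\rm{I}}_{\phi_-^{(n)}}$ equal $\Gamma(m+1)/\prod_{k=1}^{m}\bigl(-\phi_-^{(n)}(k)\bigr)$, so pointwise convergence of the Laplace exponents at the integers immediately gives convergence of every moment to the corresponding moment of ${\rm{I}}_{\phi_-}$; since the latter is moment determinate, convergence in distribution follows from the moment convergence theorem in Feller. This is three lines, involves no path-space analysis, and never needs to isolate the nondegeneracy condition $\phi_-(1)<0$, which is built into the finiteness of the moments. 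Your pathwise argument --- Skorokhod convergence of $(\xi^{(n)},\e^{(n)})$, truncation at a deterministic horizon $M$, and the uniform $L^1$ tail bound $\E[R_M^{(n)}]=e^{M\phi_-^{(n)}(1)}/(-\phi_-^{(n)}(1))$ --- is sound: the identity $\E[e^{\xi^{(n)}_t}\Id{t<\e^{(n)}}]=e^{t\phi_-^{(n)}(1)}$ is exactly the defining property of the killed Laplace exponent, the bound is uniform over large $n$ because $\phi_-^{(n)}(1)\to\phi_-(1)<0$, and Billingsley's approximation theorem closes the argument. What your route buys is independence from moment determinacy (so it would survive in settings where the explicit moment formula is unavailable), at the cost of more bookkeeping about continuity points of the integration functional and joint convergence with the killing time. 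For part (b) you and the paper do essentially the same thing: both reduce to convergence of $\int_0^t e^{\xi^{(n)}_s}ds$ for each fixed $t$ via Skorokhod convergence and then integrate against the density $qe^{-qt}\,dt$; the paper phrases this with distribution functions, Portmanteau, Fatou's lemma and the atomlessness of the law of ${\rm{I}}_{\Psi_q}$ from Theorem \ref{Th1}, whereas your bounded-continuous test-function formulation with dominated convergence bypasses the atomlessness entirely, a small but genuine simplification.
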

\begin{remark}
A case similar to $(a)$ was treated in Lemma 4.8 in \cite{Pardo-Patie-Savov}. However there it is assumed that the subordinators are proper. Note that case $(b)$ is far simpler than Lemma 4.8 in \cite{Pardo-Patie-Savov} as we are strictly in the killed case and the exponential functional up to a finite time horizon is continuous in the Skorohod topology.
\end{remark}
\begin{proof}
 First we use the fact that the law of the exponential functional of a negative of a possibly killed subordinator is moment determinate. More specifically, Carmona et al.~\cite{Carmona-Petit-Yor-97}, showed, writing $\E\left[{\rm{I}}_{\phi_-^{(n)}}^m\right] =M_m^{(n)}$, that
\begin{eqnarray}\label{eq:ms}
M_m^{(n)} &=&\frac{\Gamma(m+1)}{\prod_{k=1}^{m}-\phi_-^{(n)}(k)}, \: m=1,2,\ldots\,\,.
\end{eqnarray}
From the convergence of the Laplace exponents, we deduce that, for all integers $m\geq 1$, $\lim_{n\to \infty}M_m^{(n)} =\frac{\Gamma(m+1)}{\prod_{k=1}^{m}-\phi_-(k)}$, which is the $m$-th moment of the exponential functional ${\rm{I}}_{\phi_-}$. Item (a) follows then from \cite[Examples (b) p.269]{Feller-71}.
Next, \eqref{eq:cpl} combined with $\Psi^{(n)}(0)= \Psi(0)=0$, implies that the corresponding sequence of L\'evy processes $\lb\xi^{(n)}\rb_{n\geq 1}$ converges in distribution to a L\'evy process $\xi$. Using Skorohod-Dudley's theorem, we assume  that the convergence holds a.s.~on the Skorohod space $\mathcal{D}((0,\infty))$ and check that, for any $t>0$,
\[\int_0^t e^{\xi^{(n)}_s}ds\stackrel{d}\rightarrow \int_0^t e^{\xi_s}ds.\]
 Then applying Portmanteau's theorem, for any fixed $t,x\geq 0$, we have that
\[\limsup_{n\rightarrow \infty} \P\left(\int_0^t e^{\xi^{(n)}_s}ds \leq x\right)\leq \P\left(\int_0^t e^{\xi_s}ds \leq x\right)\]
\[\liminf_{n\rightarrow \infty} \P\left(\int_0^t e^{\xi^{(n)}_s}ds < x\right)\geq \P\left(\int_0^t e^{\xi_s}ds < x\right).\]
Hence since, for any $q>0$ and $A\subset \R_{+}$,
\[\P\lb {\rm{I}}_{\Psi_{q}}\in A\rb=q\int_{0}^{\infty}e^{-qt}\P\lb \int_{0}^{t}e^{\xi_{s}}ds\in A\rb dt\]
and $qe^{-qt}dt$ defines a finite measure, we have from the reverse Fatou's lemma (resp. Fatou's lemma) that
\[\limsup_{n\rightarrow \infty} \int_0^{\infty}dt q e^{-qt}\P\left(\int_0^t e^{\xi^{(n)}_s}ds \leq x\right)\leq \int_0^{\infty}dt q e^{-qt}\P\left(\int_0^t e^{\xi_s}ds \leq x\right)=\P\lb {\rm{I}}_{\Psi_{q}}\leq x\rb\]
\[\liminf_{n\rightarrow \infty} \int_0^{\infty}dt q e^{-qt}\P\left(\int_0^t e^{\xi^{(n)}_s}ds < x\right)\geq \int_0^{\infty}dt q e^{-qt}\P\left(\int_0^t e^{\xi_s}ds < x\right)=\P\lb {\rm{I}}_{\Psi_{q}}< x\rb.\]
This suffices since from Theorem \ref{Th1} we know that $\P\lb {\rm{I}}_{\Psi_{q}}= x\rb=0$, for all $x\geq 0$.
\end{proof}

Now, we have all the ingredients to complete  the proof of Theorem \ref{MainTheorem} in the case ${\bf{P+}}$.
Let us consider, for any $\delta>0$,  the L\'evy process $\xi^{(\delta)}=(\xi_t^{(\delta)})_{t\geq0}$, with Laplace exponent $\Psi^{(\delta)}$, constructed from $\xi$ by tilting the positive jumps. More precisely,  we  modify  the L\'evy measure of $\xi$ as follows
  \[\Pi^{(\delta)}(dy)= \Pi(dy)\Id{y<0}+e^{-\delta y}\Pi_+(dy)\]
  and leave the Gaussian coefficient and the linear term untouched.
 From \cite[Theorem 25.17]{Sato-99}, we have that $|\Psi^{(\delta)}(s)|<+\infty$, for any $s\in (0,\delta)$. For $\Psi_q^{(\delta)}$, we define $\beta^{*}_\delta(q) $ as in \eqref{eq:db}
   and choose $\beta$ such that  $0<\beta<\delta\wedge\beta^{*}_\delta(q) =\delta'$. Then, since $\Pi_+(dy)=\pi_+(y)dy \in \mathcal{P}$ the mapping defined on $\R^+$ by
\[ y\mapsto e^{\beta y}\int_y^{\infty} \Pi^{(\delta)}(dr) = e^{(\beta-\delta) y}\int_0^{\infty} e^{-\delta r}\pi_+(r+y)dr\]
is plainly non-increasing. Hence $\Psi^{(\delta)}$ satisfies the condition ${\bf{T_{\delta'}}}$. Moreover,  $e^{\beta y}\Pi_+^{(\delta)}(dy) \in \mathcal{P}$ and hence from the item (2) of  Proposition \ref{lem:ak},  we have with the obvious notation $e^{\beta y}\mu^{(\delta)}_{q_+}(dy) \in \mathcal{P}$.
Thus, the L\'evy process with characteristic exponent  $\Psi_q^{(\delta)}$ satisfies the conditions of Lemma \ref{lem:me} and we deduce that
\begin{equation*}
{\rm{I}}_{\Psi_q^{(\delta)}}\stackrel{d}={\rm{I}}_{\phi^{(\delta)}_{q_-}}\times {\rm{I}}_{ \psi^{(\delta),q_+}},
\end{equation*}
where we have set $ \Psi_q^{(\delta)}(z)=-\phi^{(\delta)}_{q_-}(z)\phi_{q_+}^{(\delta)}(z)$ and $\psi^{(\delta),q_+}(z)=z\phi_{q_+}^{(\delta)}(z)$. Next, since as $\delta\rightarrow 0$, $\Pi^{(\delta)}(dy)\stackrel{v}\rightarrow \Pi(dy)$, where $\stackrel{v}\rightarrow $ stands for the vague convergence, we have that $\lim_{\delta \to 0}\xi^{(\delta)}\stackrel{d}=\xi$, see \cite[Theorem 13.14]{Kallenberg}. Putting $h^{(\delta)}(y)=e^{-\delta y}$ we see that the assumptions of  \cite[Lemma 4.9]{Pardo-Patie-Savov} are satisfied ( note that the only case which \cite[Lemma 4.9]{Pardo-Patie-Savov} does not encompass, i.e. when $q=0$ and $\xi$ does not drift to $-\infty$, is ruled out by our assumptions) and thus we have, for all $s\geq0$,
\begin{eqnarray}
\lim_{\delta \to 0}\phi^{(\delta)}_{q_-}(s) =\phi_{q_-}(s) \textrm{  and  } \lim_{\delta \to 0}\phi_{q_+}^{(\delta)}(-s) =\phi_{q_+}(-s). \label{eq:ca}
\end{eqnarray}
From \eqref{eq:ca} combined with Lemma \ref{lem:ce} (a)  we get that
\begin{eqnarray*}
\lim_{\delta \to 0}{\rm{I}}_{\phi^{(\delta)}_{q_-}}\stackrel{d}={\rm{I}}_{\phi_{q_-}}.
\end{eqnarray*}
Next from \eqref{eq:ca}, we deduce that for any $s\geq 0$, $\lim_{\delta \to 0}\psi^{(\delta),q_{+}}(-s) =\lim_{\delta \to 0}-s\phi_{q_+}^{(\delta)}(-s) =-s\phi_{q_+}(-s)= \psi^{q_+}(-s)$ and $\lim_{\delta \to 0}(\psi^{(\delta),q_{+}})'(0^-) =\lim_{\delta \to 0}\phi_{q_+}^{(\delta)}(0)=\phi_{q_+}(0)=(\psi^{q_{+}})'(0^-)$. Thus, we can apply   \cite[Lemma 4.8 (a)]{Pardo-Patie-Savov} to get
\begin{eqnarray*}
\lim_{\delta \to 0}{\rm{I}}_{\psi^{(\delta),q_{+}}}\stackrel{d}={\rm{I}}_{\psi^{q_+}}.
\end{eqnarray*}
Finally, since $\lim_{\delta \to 0}\xi^{(\delta)}\stackrel{d}=\xi$,  Lemma \ref{lem:ce} (b)  implies, when  $q>0$, that
\begin{eqnarray*}
\lim_{\delta \to 0}{\rm{I}}_{\Psi^{(\delta)}_{q}}\stackrel{d}={\rm{I}}_{\Psi_{q}}
\end{eqnarray*}
and the case when $q>0$ is finished. 
When $q=0$ due to the considerations above we have already shown that
\[\lim_{\delta\to 0}{\rm{I}}_{\phi^{(\delta)}_{q_-}}\times {\rm{I}}_{ \psi^{(\delta),q_+}}={\rm{I}}_{\phi_{q_-}}\times {\rm{I}}_{ \psi^{q_+}}.\]
It remains to show that ${\rm{I}}_{\Psi^{(\delta)}}\stackrel{d}\rightarrow{\rm{I}}_{\Psi}$, as $\delta\rightarrow 0$. From the construction of $\xi^{(\delta)}$ we can write
\[\xi_t=\xi_t^{(\delta)}+\tilde{\xi}_t^{(\delta)}, \: t\geq0,\]
where $\tilde{\xi}^{(\delta)}=(\tilde{\xi}_t^{(\delta)})_{t\geq0}$ is a subordinator with zero drift and \LL measure $(1-e^{-\delta y})\Pi(dy)\mathbb{I}_{\{y>0\}}$ which is taken independent of $\xi^{(\delta)}$. Therefore $\xi_{t}\geq \xi^{(\delta)}_{t}$, for all $t\geq 0$, and hence we conclude that
$\lim_{\delta \to 0}{\rm{I}}_{\Psi^{(\delta)}}\stackrel{d}={\rm{I}}_{\Psi}$
from the monotone convergence theorem. This completes the proof of Theorem \ref{MainTheorem} in the case  ${\bf{P+}}$.

\subsection{The case ${\bf{P^q_{\pm}}}$}
Since the case $q=0$ was treated in \cite{Pardo-Patie-Savov}, we assume in the sequel that $q>0$.
In what follows, we provide a necessary condition on the L\'evy measures of the characteristic exponent of bivariate subordinators in order that they correspond to the Wiener-Hopf factors of a killed L\'evy process. We mention that Vigon \cite{Vigon} provides such a criteria for proper L\'evy processes and our condition relies heavily on his approach.
\begin{lemma} \label{lem:pk} \label{lem:t2wh}
Let us consider $\phi_{q_+}$ and $\phi_{q_-}$ as defined in Proposition \ref{prop:twh}. Assume  that $\mu_{q_+} \in \mathcal{P}$ and $ \mu_{q_-} \in \mathcal{P}$ with $q=q_+ q_->0$.
 \begin{enumerate}
 \item There exists a characteristic exponent of a killed L\'evy process  $\Psi_q$ such that
\[\Psi_q (z) = - \Phi_+(q,-z)\Phi_-(q,z)=-\phi_{q_+}(z)\phi_{q_-}(z).\]
\item If in addition for any $0<\beta<\beta_+$, for some $\beta_+>0$, $-\infty<\phi_{q_+}(\beta)<0$ and $e^{\beta y}\mu_{q_+}(dy) \in \mathcal{P}$, then  $\Psi_q$ satisfies the condition ${\bf{T_{\beta^+}}}$.
    \end{enumerate}
\end{lemma}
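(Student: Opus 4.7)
The plan for part (1) is to explicitly identify the L\'evy--Khintchine data of the candidate exponent $\Psi_q(z):=-\phi_{q_+}(z)\phi_{q_-}(z)$ and verify it corresponds to a killed L\'evy process. Writing $\phi_{q_\pm}(z) = -q_\pm + \overline{\phi}_{q_\pm}(z)$, where $\overline{\phi}_{q_\pm}$ are the Laplace exponents of a proper subordinator (respectively the negative of a proper subordinator) with L\'evy measures $\mu_{q_\pm}\in\mathcal{P}$, one gets, using $q_+q_-=q$, the expansion
\begin{equation*}
-\phi_{q_+}(z)\phi_{q_-}(z)=-q+q_-\overline{\phi}_{q_+}(z)+q_+\overline{\phi}_{q_-}(z)-\overline{\phi}_{q_+}(z)\overline{\phi}_{q_-}(z).
\end{equation*}
The constant $-q$ encodes the killing rate, and the middle two terms are themselves Laplace exponents of (scalar multiples of) a subordinator and the negative of a subordinator, so they contribute explicit L\'evy data. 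The task thus reduces to showing that $-\overline{\phi}_{q_+}(z)\overline{\phi}_{q_-}(z)$ is the characteristic exponent of a proper (unkilled) L\'evy process.

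This reduction places us in the scope of Vigon's theorem: given two proper Laplace exponents of a subordinator and the negative of a subordinator with L\'evy measures in $\mathcal{P}$, one wants to construct a L\'evy process having them as its Wiener--Hopf factors. The construction proceeds by \emph{inverting} the équation amicale recalled in Proposition \ref{lem:ak}, i.e.\ solving $\overline{\mu}_{q_+}(y) = \int_0^\infty \overline{\Pi}_+(y+r)\mathcal{U}_-^{(q)}(dr)$ (and its symmetric counterpart) for $\overline{\Pi}_\pm$ in terms of $\overline{\mu}_{q_\pm}$ and the potentials attached to $\phi_{q_\mp}$. Under $\mu_{q_\pm}\in\mathcal{P}$, this Volterra-type equation admits a non-negative solution with a non-increasing density, yielding a valid L\'evy measure $\Pi$ with $\int(1\wedge y^2)\Pi(dy)<\infty$. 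Matching coefficients in the expansion then pins down the Gaussian coefficient $\delta_+\delta_-$ and the drift, completing the identification of $\Psi_q$ as the characteristic exponent of a killed L\'evy process.

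For part (2), the factorization $\Psi_q(z)=-\phi_{q_+}(z)\phi_{q_-}(z)$, combined with the hypothesis $-\infty<\phi_{q_+}(\beta)<0$ and the automatic finiteness $|\phi_{q_-}(\beta)|<\infty$ on $\R_+$ (since $\phi_{q_-}$ is the Laplace exponent of the negative of a killed subordinator), yields $|\Psi_q(\beta)|<\infty$ for all $\beta\in(0,\beta^+)$. For the monotonicity $e^{\beta y}\overline{\Pi}_+(y)dy\in\mathcal{P}$, one invokes the construction of $\Pi_+$ from part~(1): since $e^{\beta y}\mu_{q_+}(dy)\in\mathcal{P}$, the same integration-by-parts argument used in the proof of Proposition \ref{lem:ak}(2), applied to the inverse form of the équation amicale, transfers the weighted monotonicity from $\mu_{q_+}$ to $\overline{\Pi}_+$. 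The main obstacle throughout is precisely this inversion of Vigon's équation amicale: expressing $\overline{\Pi}_+$ as an explicit, non-negative functional of $\overline{\mu}_{q_+}$ and the potential $\mathcal{U}_-^{(q)}$, and verifying the $(1\wedge y^2)$-integrability---the non-increasing density structure afforded by $\mathcal{P}$ is what makes this tractable.
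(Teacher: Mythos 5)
Your part (1) is, up to a regrouping of terms, the paper's own argument: the paper writes $-\phi_{q_+}(z)\phi_{q_-}(z)=-(\phi_+(z)-q_+)\phi_-(z)+q_-\phi_+(z)-q_+q_-$ and invokes Vigon's philanthropy theory for the first summand (a killed ascending factor paired with a proper descending one), while you expand fully and apply Vigon to the product of the two proper parts; both reduce item (1) to the same cited theorem, so your sketch of Vigon's construction is dispensable (and note that Vigon's \emph{\'equation amicale invers\'ee} is an explicit formula for $\overline{\Pi}_+$, not a Volterra equation one has to solve). Part (2) is where you genuinely diverge. The paper never touches $\Pi_+$ directly: it verifies that $\mathcal{T}_{\beta}\Psi_q(z)=-\phi_{q_+}(z+\beta)\,\mathcal{T}_{\beta}\phi_{q_-}(z)$, that the tilted factors have L\'evy measures $e^{\beta y}\mu_{q_+}(dy)$ and $e^{-\beta y}\left(\mu_{q_-}(dy)+\beta\overline{\mu}_{q_-}(y)dy\right)$ again in $\mathcal{P}$, applies item (1) to conclude that $\mathcal{T}_{\beta}\Psi_q$ is a genuine unkilled L\'evy exponent, and then reads off ${\bf{T_{\beta^+}}}$ because the positive-jump part of the L\'evy measure of $\mathcal{T}_{\beta}\Psi_q$ is $e^{\beta y}\left(\Pi(dy)-\beta\overline{\Pi}_+(y)dy\right)$, whose positivity as a measure is precisely the monotonicity of $y\mapsto e^{\beta y}\overline{\Pi}_+(y)$. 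You instead push the weighted monotonicity directly through the killed \'equation amicale invers\'ee, $\overline{\Pi}_+(x)=\int_{[0,\infty)}\overline{\mu}_{q_+}(x+y)\mu_{q_-}(dy)+\delta_- u_{q_+}(x)+q_-\overline{\mu}_{q_+}(x)$ with $u_{q_+}$ the density of $\mu_{q_+}$; each term is indeed non-increasing after multiplication by $e^{\beta x}$ under your hypotheses, so the argument closes and is arguably more transparent. The one caveat is that this identity appears nowhere in the paper --- Proposition \ref{lem:ak} records only the direct \'equation amicale --- so you must extract it from the construction carried out in part (1); the paper's detour through $\mathcal{T}_{\beta}$ is designed exactly to avoid that extraction, at the cost of a second appeal to item (1) and a uniqueness argument for the Wiener--Hopf factors.
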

\begin{proof}
From Proposition \ref{prop:twh}, writing $\phi_{q_\pm}(z)=\phi_{\pm}(z)-q_{\pm}$, we observe that
\begin{eqnarray*}
-\Phi_+(q,-z)\Phi_-(q,z) &=& -\phi_{q_+}(z)\phi_{q_-}(z)= -(\phi_+(z)-q_+)\phi_-(z)+q_-\phi_+(z)-q_+q_-.
\end{eqnarray*}
Then, from Vigon's philantropy theory, we know that $-(\phi_+(z)-q_+)\phi_-(z)$ is the characteristic exponent of an unkilled  L\'evy process that drifts to $-\infty$. It is also clear that $q_-\phi_+(z) $  is  the characteristic exponent of an unkilled subordinator. From the inequality $q_+q_->0$ we complete the proof of the first item.
Next, from the form of $\phi_{q_-}$ in Proposition \ref{prop:twh} and carefully using the same techniques as in deriving \eqref{eq:ct_t} we deduce that $\Tb\phi_{q_-}$ is the Laplace exponent of a negative of an unkilled subordinator whose  \LL measure has the form ${\mu}^{\beta}_{q_-}(dy)=e^{-\beta y}({\mu}_{q_-}(dy)+\beta \overline{\mu}_{q_-}(y)dy)$. Similarly, due to our assumption, i.e.~$-\infty<\phi_{q_+}(\beta)<0$, the mapping $s\mapsto \phi_{q_+}(s+\beta)$ is the Laplace exponent of a killed subordinator with \LL measure  $e^{\beta y}\mu_{q_+}(dy)$. As $ \mu_{q_-} \in \mathcal{P}$, we easily check that ${\mu}^{\beta}_{q_-}(dy) \in \mathcal{P}$ and since, by assumption, $e^{\beta y}\mu_{q_+}(dy) \in \mathcal{P}$, we have from the first item  that there exists a characteristic exponent $\Psi^{\beta}$, of an unkilled L\'evy process drifting to $-\infty$, which is defined by
\[\Psi^{\beta}_q (z)=-\phi_{q_+}(z+\beta) \Tb\phi_{q_-}(z)= -\phi_{q_+}(z+\beta)\frac{z}{z+\beta}\phi_{q_-}(z+\beta) .\]
Moreover, as
\[\mathcal{T}_{\beta}\Psi_q(z)=\frac{z}{z+\beta}\Psi_q(z+\beta)= -\phi_{q_+}(z+\beta)\frac{z}{z+\beta}\phi_{q_-}(z+\beta)= -\phi_{q_+}(z+\beta)\Tb\phi_{q_-}(z),\]
we deduce, by means of an uniqueness argument, that $\Tb\Psi_{q}=\Psi^{\beta}_q$. Then, by the mere definition of condition ${\bf{T_{\beta^+}}}$ we check that $\Psi_q$ satisfies condition ${\bf{T_{\beta^+}}}$.
 \end{proof}
We are ready to complete the proof of Theorem \ref{MainTheorem}. First, we set, for any $\delta>0$,
\begin{equation} \label{eq:phi_delta}
\phi_{q_+}^{(\delta)}(z)=\phi_{q_+}(z-\delta)-\phi_{q_+}(-\delta)+\phi_{q_+}(0).
\end{equation}
This is the Laplace exponent of a subordinator with drift $\delta_+$, killing rate $-\phi_{q_+}(0)=q_+>0$ and L\'evy measure  ${\mu}^{(\delta)}_{q_+}(dy)=e^{-\delta y}\mu_{q_+}(dy).$ Next we choose $\delta>0$ so small that $\phi_{q_+}^{(\delta)}(\delta)<0$.
 Since, by assumption $\mu_{q_\pm} \in \mathcal{P}$ plainly  ${\mu}^{(\delta)}_{q_+} \in \mathcal{P}$, and  thus according to item (1) of Lemma \ref{lem:pk}, there exists a characteristic exponent $\Psi_q^{(\delta)}$ of a killed L\'evy process  such that
\begin{equation}\label{eq:psi_delta}
\Psi^{(\delta)}_q(z)= -\phi_{q_+}^{(\delta)}(z)\phi_{q_-}(z).
\end{equation}
Moreover since we have that $|\phi_{q_+}^{(\delta)} (s)|<+\infty$, for any $s<\delta$, we get from \cite[Lemma 4.2]{Pardo-Patie-Savov} that $|\Psi^{(\delta)}_q(s)|<+\infty$, for any $0<s<\delta$.  Also, since   $\phi_{q_+}^{(\delta)}$ is increasing on $(-\infty, \delta)$, we get from our choice of $\delta$ that, for any $0<\beta<\delta $,  $-\infty<\phi_{q_+}^{(\delta)}(\beta)<0$. As for any $0<\beta<\delta $, $e^{\beta y}{\mu}^{(\delta)}_{q_+}(dy) \in \mathcal{P}$, we deduce from item (2) of Lemma \ref{lem:t2wh}, that $\Psi_q^{(\delta)}$ satisfies the condition ${\bf{T_\delta}}$. Hence, we can apply Proposition \ref{lem:me} to get the identity
\begin{equation*}
{\rm{I}}_{\Psi_q^{(\delta)}}\stackrel{d}={\rm{I}}_{\phi_{q_-}}\times {\rm{I}}_{\psi^{(\delta),q_{+}}},
\end{equation*}
where $\psi^{(\delta),q_{+}}(z)=z\phi_{q_+}^{(\delta)}(z)$.  Next, on the one hand, we have, from \eqref{eq:phi_delta}, that for any $s\geq0,$ $\lim_{\delta \to 0}\phi_{q_+}^{(\delta)}(s)=\phi_{q_+}(s)$ and thus $\lim_{\delta \to 0}\psi^{(\delta),q_+}(s)=\psi^{q_+}(s)$ together with $\lim_{\delta \to 0}(\psi^{(\delta),q_+})'(0^-)=\lim_{\delta \to 0}\phi^{(\delta)}_{q_+}(0)=\phi_{q_+}(0)=(\psi^{q_+})'(0^-)$. Thus, we can use \cite[Lemma 4.8(a)]{Pardo-Patie-Savov} to get $
\lim_{\delta \to 0}{\rm{I}}_{\psi^{(\delta),q_+}}\stackrel{d}={\rm{I}}_{\psi^{q_{+}}}$. On the other hand, we deduce  from \eqref{eq:psi_delta} that, for any $z \in i\R$, $\lim_{\delta \to 0}\Psi^{(\delta)}_q(z)=\Psi_{q}(z)$ and for any $\delta\geq0$, $\Psi^{(\delta)}_q(0)=\Psi_{q}(0)$. Hence, from from Lemma \ref{lem:ce} (b), we have
$
\lim_{\delta \to 0}{\rm{I}}_{\Psi^{(\delta)}_{q}}\stackrel{d}={\rm{I}}_{\Psi_q}$,
 which completes the proof of Theorem \ref{MainTheorem}.
\section{Proof of the corollaries and some examples}\label{sec:proof-cor}

\subsection{Proof of Corollary \ref{cor:main}}
From the Wiener-Hopf factorization \eqref{eq:wh1} and the assumptions we have that $-\infty<\Psi_q(-1)=-\phi_{q_+}(-1)\phi_{q_-}(-1)\leq 0$. Then we get from \cite[Lemma 4.1]{Pardo-Patie-Savov}  that the mapping $s\mapsto \phi_{q_-}(s)$ is well-defined on $ [-1,\infty)$ and since $\phi_{q_+}(-1)<0$, we conclude that $\phi_{q_-}(-1)\leq 0$. Thus,  $\tilde{\phi}_{q_-}(s)=\phi_{q_-}(s-1)$ is a Laplace of a negative of a possibly killed subordinator and so  $\mathcal{T}_1\tilde{\phi}_{q_-}$ is the Laplace exponent of a negative of a proper subordinator.
From \eqref{eq:ms}, we have, for $m=1,2,\ldots,$
\begin{eqnarray*}
\E\left[{\rm{I}}_{\phi_{q_-}}^m\right] &=&\frac{\Gamma(m+1)}{\prod_{k=1}^{m}-\phi_{q_-}(k)} =\frac{1}{m+1}\frac{\Gamma(m+1)}{\prod_{k=1}^{m}-\frac{k}{k+1}\tilde{\phi}_{q_-}(k+1)}=\frac{1}{m+1}\frac{\Gamma(m+1)}{\prod_{k=1}^{m}-\mathcal{T}_1\tilde{\phi}_{q_-}(k)}.
\end{eqnarray*}
By moment identification and moment determinacy of ${\rm{I}}_{\phi_{q_-}}$, see \cite{Carmona-Petit-Yor-97}, we deduce that
\begin{equation} \label{eq:us}
{\rm{I}}_{\phi_{q_-}} \stackrel{d}= U  \times {\rm{I}}_{\mathcal{T}_1\tilde{\phi}_{q_-}},
\end{equation}
where $U$ stands for an uniform random variable on $(0,1)$.  Thus,  from Khintchine  Theorem, see e.g.~\cite[Theorem p.158]{Feller-71}, we have that  $m_{\phi_{q_-}}$ is non-increasing on $\R^+$.  We also get, from \eqref{eq:us}, that 
\[ 
{m}_{\phi_{q_-}}(x)= \int_x^{\infty}{m}_{\mathcal{T}_1\tilde{\phi}_{q_-}}(y)dy/y,
 \]
which combined with  \eqref{Th1-1} and \eqref{eq:ms} yields  ${m}_{\phi_{q_-}}(0)= -\tilde{\phi}_{q_-}(1)=-\phi_{q_-}(0)>0$ since when $q= 0$ we assume that $\xi$ drifts to $-\infty$ and hence the descending ladder height process is the negative of a killed subordinator.
Since we also suppose  that either one of the two conditions of Theorem \ref{MainTheorem} applies, we conclude that
\begin{equation}
{\rm{I}}_{\Psi_q} \stackrel{d}= U  \times {\rm{I}}_{\mathcal{T}_1\tilde{\phi}_{q_-}}\times {\rm{I}}_{\psi^{q_+}}=U\times V
\end{equation}
which gives that $m_{\Psi_q}$ is non-increasing on $\R^+$ and hence a.e. differentiable on $\R^+$. Moreover, since 
\[ 
m_{\Psi_q}(x)=\int_0^{\infty}{m}_{\phi_{q_-}}(x/y){m}_{\psi^{q_+}}(y)dy/y=\int_{x}^{\infty}{m}_{V}(y)dy/y,
 \]
we deduce from the discussion above and  an argument of dominated converge that
\begin{eqnarray*}
m_{\Psi_q}(0)&=&-\phi_{q_-}(0)\int_0^{\infty}{m}_{\psi^{q_+}}(y)dy/y \\
&=&\phi_{q_-}(0)\phi_{q_+}(0)=q
 \end{eqnarray*}
where the last line follows from \eqref{eq:msn}. To prove the claim of continuity in item (i) note that from the second integral representation $m_{\Psi_q}(x)$ is continuous.

\noindent In order to prove the first statement of item (ii) we show that, for any $q>0$, we have the following factorization
\begin{equation}\label{eq:hye}
{\rm{I}}_{\Psi_q}\stackrel{d}= \e_{1} \times {\rm{I}}_{\psi^{q_+}},
\end{equation}
where  $\psi^{q_+} (z)=z\Psi_{q}(z) $. Indeed, this  identity follows readily from Theorem \ref{MainTheorem}, since, in this case, $\mu_{q_+} \in \Ph$, $\phi_{q_-}(z) \equiv 1$ and thus ${\rm{I}}_{\phi_{q_-}} = \int_0^{\e_{1}}e^{0}ds=\e_{1}$. Thus,  ${\rm{I}}_{\Psi_q}$ is a mixture of exponential distributions and the complete monotonicity property of  its density follows from \cite[Theorem 53.2]{Sato-99}.  Moreover, from \eqref{eq:hye}, we deduce that
\[m_{\Psi_q}(x)=\int_0^{\infty}e^{-x/y}m_{\psi^{q_{+}}}(y)dy/y\]
and for any $x<\lim_{s \to \infty}-s\Psi_q(-s)=1/b>0$, we get
\begin{eqnarray*}
m_{\Psi_q}(x)&=& \sum_{n=0}^{\infty}\frac{1}{n!}(-x)^n\int_0^{\infty}y^{-n-1}m_{\psi^{q_{+}}}(y)dy\\
&=&  q\left(1+\sum_{n=1}^{\infty}\frac{\prod_{k=1}^{n}-\Psi_{q}(-k)}{n!}(-x)^n\right),
\end{eqnarray*}
where we have used an  argument of dominated convergence and \eqref{eq:msn}. Next, assume that $b>0$ and thus the previous power series defines a function analytical  on the disc of radius $b$. Since the mapping $x \mapsto m_{\Psi_q}(x)$ is  the Laplace  transform of some positive measure, its first singularity occurs on the negative real line, see e.g.~\cite[Chap.~2]{Widder}, which means at the point $-b$. Following the proof of \cite[Proposition 2.1]{Patie-abs-08}, we can then apply the Euler transform, see e.g.~\cite{Prodinger}, to obtain the power series representation \eqref{eq:pse} which actually defines an analytical function on the half-plane $\Re(z)>-(2b)^{-1}$.   The proof of the claims of (ii) is completed after observing from the power series representations that $m_{\Psi_q}(0)=q$.
Item (iii) follows easily from the Wiener-Hopf factorization for spectrally positive L\'evy processes which yields the identity
\[\Psi_q(s)=-\frac{\Psi(s)-q}{s+\gamma_q}(-s-\gamma_q).\]
Thus, in this case, we have ${\rm{I}}_{\phi_{q_-}}=\int_0^{\e_{\gamma_q}} e^{-s}ds= 1-e^{-\e_{\gamma_q}}$ which can easily be seen to be a $B^{-1}(1,\gamma_q)$, which provides the factorization from Theorem \ref{MainTheorem}. We complete the proof of this item by recalling that in this case the mapping $s\mapsto \Psi_q(s)$ is well-defined on $\R^-$ and $\mu_{q_+} \in \mathcal{P}$, see e.g. \cite[Remark p. 103]{Vigon}.
Finally, the proof of the item (iv) goes along the lines of the one of \cite[Corollary 2.1]{Pardo-Patie-Savov}.
\subsection{Some illustrative examples}
For this part, we introduce the following notation. For any $\gamma>0$  and $0<\alpha<1$, we write, for any $ s\geq-\gamma$, \[\phi_{\gamma}(s)=(s+\gamma)^{\alpha}.\] We start by considering  the case where $\phi_{q_-}(s)=-\phi_{\gamma}(s),\: s\geq0$, that is, using the notation of Proposition \ref{prop:twh}, $\mu_-^q(dy)=\frac{\alpha}{\Gamma(1-\alpha)} e^{-\gamma y}y^{-\alpha-1}dy, q_-=\gamma^{\alpha}, \delta_-=0$. Since the random variable ${\rm{I}}_{\phi_{q_-}}$ is moment determinate, we easily get, from \eqref{eq:ms}, that, for any $\Re(z)>-1$,
\begin{eqnarray} \label{eq:mpg}
\E\left[{\rm{I}}_{\phi_{q_-}}^z\right] &=&\frac{\Gamma(z+1)\Gamma^{\alpha}(\gamma+1)}{\Gamma^{\alpha}(z+1+\gamma)} .
\end{eqnarray}
Assuming, for sake of simplicity, that $\gamma$ is not an integer, and applying the inverse Mellin transform, see e.g.~\cite[Section 3.4.2]{Paris}, we get
\[m_{{\rm{I}}_{\phi_{q_-}}}(x)=\sum_{n=0}^{\infty}\frac{\Gamma^{\alpha}(\gamma+1)}{ \Gamma^{\alpha}(-n+\gamma)}\frac{(-x)^n}{n!}\]
and the series is easily seen to be absolutely convergent for all $ x>0$. 
From Corollary \ref{cor:main} (i), we deduce that for any $\gamma\geq1$, this series is positive and non-increasing on $\R^+$. We can also check that $m_{{\rm{I}}_{\phi_{q_-}}}(0)=-\phi_{q_-}(0)=\gamma^{\alpha}$.  Moreover, assuming that $\mu_{q_+}(dy) \in \mathcal{P}$, then according to Proposition \ref{lem:pk} there exists a Laplace exponent $\tilde{\Psi}_q$ of a killed L\'evy process such that
\[\tilde{\Psi}_q(z)=\phi_{\gamma}(z) \phi_{q_+}(z)\]
and where we have set $q=\gamma^{\alpha} \phi_{q_+}(0)>0$. As above, we wote that from Corollary \ref{cor:main} (i),  that  for any $\gamma\geq1$, the density $m_{{\rm{I}}_{\tilde{\Psi}_q}}$ is bounded and non-increasing  on $\R^+$. Next, according to the case  ${\bf{P^q_{\pm}}}$ of Theorem \ref{MainTheorem}, we have, writing, $\psi^{q_+}(z)=z\phi_{q_+}(z)$,  that
\begin{eqnarray}
m_{{\rm{I}}_{\tilde{\Psi}_q}}(x)
&=&\int_0^{\infty}\sum_{n=0}^{\infty}\frac{\Gamma^{\alpha}(\gamma+1)}{ \Gamma^{\alpha}(-n+\gamma)}\frac{(-x/y)^n}{n!}m_{\psi^{q_+}}(y)/ydy \nonumber \\
&=&
\sum_{n=0}^{\infty}\frac{\Gamma^{\alpha}(\gamma+1)}{ \Gamma^{\alpha}(-n+\gamma)}\frac{(-x)^n}{n!}\int_0^{\infty}y^{-n-1}m_{\psi^{q_+}}(y)dy \nonumber  \\
&=&-\phi_{q_+}(0)\sum_{n=0}^{\infty}\frac{\Gamma^{\alpha}(\gamma+1)}{ \Gamma^{\alpha}(-n+\gamma)}\frac{\prod_{k=1}^{n}\psi^{q_{+}}(-k)}{n!}\frac{(-x)^n}{n!} \nonumber  \\
&=&-\phi_{q_+}(0)\Gamma^{\alpha}(\gamma+1)\sum_{n=0}^{\infty}\frac{\prod_{k=1}^{n}\phi_{q_+}(-k)}{ \Gamma^{\alpha}(-n+\gamma)}\frac{x^n}{n!} \label{eq:pse1}
\end{eqnarray}
 where the interchange of integration and summation is justified by an argument of dominated convergence under the condition that $x<\lim_{s\to \infty} -s^{1-\alpha} /\phi_{q_+}(-s)$. We easily check that, under this condition,  the density is actually bounded with $m_{{\rm{I}}_{\tilde{\Psi}_q}}(0)=-\phi_{q_+}(0)\gamma^{\alpha}$.
  
\noindent Next, we set, for any $\alpha' \in (0,1-\alpha)$,
\[\phi_{q_+}(-s)=- \alpha'\frac{\Gamma(\alpha'(s+1)+1)}{ \Gamma(\alpha' s+1)}\]   
and we note from \cite[Section 3(1)]{Patie-aff} that $\psi^{q_+}(-s)=-s\phi_{q_+}(-s)= \frac{\Gamma(\alpha'(s+1)+1)}{ \Gamma(\alpha' s)}$  
 is  the Laplace exponent of a proper spectrally positive L\'evy process, and, writing $l=1/\alpha'$, we have 
\[ m_{\psi^{q_+}}(x) = l x^{-l-1}e^{-x^{-l}}.\]
On the one hand, from \eqref{eq:pse1}, we deduce  that  
\begin{eqnarray*}
m_{{\rm{I}}_{\tilde{\Psi}_q}}(x)
&=&\Gamma^{\alpha}(\gamma+1)\sum_{n=0}^{\infty}\frac{\Gamma(\alpha'(n+1)+1)}{ \Gamma^{\alpha}(-n+\gamma)}\frac{(-\alpha'x)^n}{n!} 
\end{eqnarray*}
which is easily seen to be absolutely convergent for all $x>0$ since $\alpha' \in (0,1-\alpha)$. This expression provides an expansion of the density for small values of the argument. In particular, we get that $   m_{{\rm{I}}_{\tilde{\Psi}_q}}(0) =   \gamma^{\alpha}\Gamma(\alpha'+1)  $. On the other hand, using the identity \eqref{eq:mpg}, we may also write, for any $x>0$,  
\begin{eqnarray*}
m_{{\rm{I}}_{\tilde{\Psi}_q}}(x)
&=&l  x^{-l -1}\int_0^{\infty}y^{l }e^{-(y/x)^{l 
}}m_{\phi_{\gamma}}(y)dy\\
&=&l  x^{-l  -1}
\sum_{n=0}^{\infty}\frac{(-1)^{n}x^{-l  n}}{n!}\int_0^{\infty}y^{l  (n+1)} m_{\phi_{\gamma}}(y)dy\\
&=&l  x^{-l  -1}
\sum_{n=0}^{\infty}\frac{\Gamma(l  (n+1)+1)\Gamma^{\alpha}(\gamma+1)}{\Gamma^{\alpha}(l  (n+1)+1+\gamma)}\frac{(-1)^{n}x^{-l  n}}{n!}\\
\end{eqnarray*}
to get an expansion of the density  for large values of its argument. Note, in particular,  that $\lim_{x\rightarrow \infty }x^{l  +1}m_{{\rm{I}}_{\tilde{\Psi}_q}}(x)
=l  \frac{\Gamma(l+1)\Gamma^{\alpha}(\gamma+1)}{\Gamma^{\alpha}(l+1+\gamma)}
$. 

\begin{remark}
The previous example illustrates nicely the fact that our main factorization  allows to get exact asymptotics for both large and small values of the argument as soon as one is able to expand as a series the density of the exponential functionals involved in the identity. 
\end{remark}

\noindent Finally, as a specific instance of  Corollary \ref{cor:main} (ii), we consider the case where $\Psi_q(s)=-\phi_\gamma(-s),s\geq0$, that is  $q=\gamma^{\alpha}$, $\delta=0$ and  $\Pi(dy)=\frac{\alpha}{\Gamma(1-\alpha)} e^{-\gamma y}y^{-\alpha-1}dy, y>0, \in \mathcal{P}$.  Thus, the series
\begin{eqnarray*}
 m_{\Psi_q}(x)&=&  \frac{\gamma^{\alpha}}{\Gamma(\gamma+1)}\sum_{n=0}^{\infty}\frac{\Gamma^{\alpha}(n+\gamma+1)}{n!}(-x)^n
\end{eqnarray*}
is absolutely convergent on $\R$ and completely monotone on $\R^+$.

\subsection{Proof of Corollary \ref{cor:stable}}
 According, for instance, to \cite[Proposition 4]{Kuznetsov-Pardo-11}, we have
\begin{equation} \label{eq:iti}
T_1 \stackrel{d}{=}  \int_0^{\e_q}e^{\xi_t} dt,
\end{equation}
where we have used the well known identity $T_1 \stackrel{d}{=} S^{-\alpha}_1$. Set $q=\frac{\Gamma(\alpha)}{\Gamma(\alpha\rho)\Gamma(1-\alpha\rho)}>0$ and note that $\xi$ is a L\'evy process with Laplace exponent $\Psi^{\alpha}$ given, for any $-1/\alpha<\Re(z)<1$, by
\begin{eqnarray*}
\Psi^{\alpha} (z)-q&=&-\frac{\Gamma(\alpha-\alpha z)\Gamma(\alpha z+1)}{\Gamma(\alpha\rho-\alpha z)\Gamma(\alpha z+1-\alpha\rho)}.
\end{eqnarray*}
First, let us consider the case when $\alpha \in (0, 1)$. We observe that $|\Psi^{\alpha} (s)|<+\infty$, for any $s\in [-1,0]$. Also we check that $ \Psi^{\alpha}(-1)-q\leq 0$ if $\frac{\Gamma(2\alpha)\Gamma(1-\alpha)}{\Gamma(\alpha(\rho+1))\Gamma(1-\alpha(\rho+1)}\geq0$ which is the case when $1-\alpha(\rho+1)\geq0$. Moreover, we know from \cite{Caballero} that for $0<\alpha<1$, the density of the L\'evy measure of $\xi$ restricted on $\R^+$ takes the form, up to a positive constant, $e^y(e^y-1)^{-\alpha-1}, y>0,$  which is easily seen to be decreasing on $\R^+$. Hence, we can apply Corollary \ref{cor:main} (i) to get that the density of $T_1$ is bounded and non-increasing. The boundedness property could have also been observed  when $\rho<1$  from \cite[Remark 5]{DoneySavov} which states that the density of $T_1$ has a finite limit at zero. Recalling that when $\rho=1$, we have $T_1\stackrel{d}{=}Z^{-\alpha}_1$, we could also  easily check from the Humbert-Pollard series representation of the density of $Z_1$, see e.g.~\cite[14.35]{Sato-99}, that the density of $T_1$ has also a finite limit at $0$. The remaining part of the  statement follows trivially.

\noindent Next, we assume that  $\alpha \in (1, 2]$ and $\rho= 1-\frac{1}{\alpha}$,  that is $Z$ is spectrally positive and thus $\xi$ is a spectrally negative L\'evy process with Laplace exponent, given, for any $\Re(z)>\frac{1}{\alpha}-1$, by
\begin{eqnarray*}
\Psi^{\alpha} (z)-q&=&\frac{\Gamma(\alpha z+1)}{\Gamma(\alpha z+1-\alpha)}.
\end{eqnarray*}
 Since $\Psi^{\alpha}\left(1-\frac{1}{\alpha}\right)-q=0$ we have $0<\gamma_q=1-\frac{1}{\alpha}\leq \frac{1}{2}$, we deduce from Corollary \ref{cor:main} (iv) that the density of $1/T_{1}$ is completely monotone which means that the density of  $S^{\alpha}_1$ is completely monotone. Note that the law of $S_1$ has been computed explicitly as a power series by Bernyk et al.~\cite{Bernyk-al}.
 We end up the paper by pointing out that in the Brownian motion case, i.e. $\alpha=2$,  the density of $S^{2}_1$ is well-known to be $m_{S^{2}_1}(x)= \frac{e^{-\frac{x}{2}}}{\sqrt{2\pi x}}$ and we get  $m_{S^{2}_1}(x)=\frac{1}{\sqrt{2}\pi}\int_{1/2}^{\infty}\frac{e^{-xr}}{\sqrt{r-1/2}}dr$.

\end{document}